 \newcommand{\PP}{{\mathbb{P}}}
 \newcommand{\gl}{\lambda}
 \DeclareMathOperator{\dom}{dom}
 \DeclareMathOperator{\Suc}{Suc}
 \DeclareMathOperator{\Lev}{Lev}
 \def\k{\kappa}
 \def\l{\lambda}
 \def\a{\alpha}
 \newtheorem{theorem}{Theorem}[section]
 \newtheorem{lemma}[theorem]{Lemma}
 \newtheorem{proposition}[theorem]{Proposition}
 \newtheorem{definition}[theorem]{Definition}
 \newtheorem{remark}[theorem]{Remark}
 \newtheorem{main theorem}[theorem]{Main Theorem}
 \newtheorem{question}[theorem]{Question}
 \numberwithin{equation}{section}
 \def\l{\lambda}
 \def\rmark{\mbox{$\rm\bf\rule{0.06em}{1.45ex}\kern-0.05em R$}}
 \def\pmark{\mbox{$\rm\bf\rule{0.06em}{1.45ex}\kern-0.05em P$}}
 \def\nmark{\mbox{$\rm\bf\rule{0.06em}{1.45ex}\kern-0.05em N$}}
 \def\vdash{\mbox{$\rm\| \kern-0.13em  - $}}
 \def\l{\lambda}
 \def\rmark{\mbox{$\rm\bf\rule{0.06em}{1.45ex}\kern-0.05em R$}}
 \def\pmark{\mbox{$\rm\bf\rule{0.06em}{1.45ex}\kern-0.05em P$}}
 \def\nmark{\mbox{$\rm\bf\rule{0.06em}{1.45ex}\kern-0.05em N$}}
 \def\vdash{\mbox{$\rm\| \kern-0.13em  - $}}
\begin{document}

 \title[The definable tree property for successors of cardinals]{The definable tree property \\for successors of cardinals}

 \author[A .  S .  Daghighi, M. Pourmahdian]{Ali Sadegh Daghighi$^{1}$, Massoud Pourmahdian$^{2~\dagger}$}

 \thanks{$^\dagger$The authors would like to thank Mohammad Golshani for helpful discussions during writing this paper and generous sharing of his ideas regarding the proof of the theorem \ref{definable tree property at a singular}.\\
$^1$Department of Mathematics and Computer Science, Amirkabir University of Technology, Hafez avenue 15194, Tehran, Iran. E-mail: \textit{a.s.daghighi@gmail.com}, Website: http://alidaghighi.org/.\\
$^2$Department of Mathematics and Computer Science, Amirkabir University of Technology, Hafez avenue 15194, Tehran, Iran. E-mail: \textit{pourmahd@ipm.ir}, Website: http://math.ipm.ac.ir/$\sim$pourmahdian/
}
  \maketitle


 \begin{abstract}
Strengthening a result of Amir Leshem \cite{leshem}, we prove that the consistency strength of holding $GCH$ together with definable tree property for all successors of regular cardinals is precisely equal to the consistency strength of existence of proper class many $\Pi^{1}_1$ - reflecting cardinals. Moreover it is proved that if $\kappa$ is a supercompact cardinal and $\lambda> \kappa$ is measurable, then there is a generic extension of the universe in which $\kappa$ is a strong limit singular cardinal of cofinality $\omega, ~ \lambda=\kappa^+,$ and the definable tree property holds at $\kappa^+$. Additionally we can have $2^\kappa > \kappa^+,$ so that $SCH$ fails at $\kappa$.
 \end{abstract}

\section{Introduction}
The tree property for a regular cardinal $\kappa$ is the statement that there is no $\kappa$ - Aronszajn tree or equivalently every $\kappa$ - tree has a cofinal branch. In general constructing a model for tree property on a regular cardinal $\kappa$ is not trivial and needs large cardinal assumptions. The problem becomes even harder and needs stronger large cardinal assumptions when one tries to get tree property on several successive regular cardinals. In this direction we have: 

\begin{proposition}\label{results} The following results are known about tree property:

\item[(1)] (Konig) The tree property holds on $\aleph_0$.

\item[(2)] (Aronszajn) The tree property does not hold on $\aleph_1$.

\item[(3)] (Specker) For every infinite cardinal $\kappa$ if $\kappa^{<\kappa}=\kappa$ then the tree property does not hold on $\kappa^+$. Specially if $CH$ holds then $\aleph_2$ does not have the tree property.

\item[(4)] (Silver - Mitchell) The tree property on $\aleph_2$ is equiconsistent with the existence of a weakly compact cardinal.

\item[(5)] (Abraham) Assuming the consistency of a supercompact cardinal and a weakly compact above it, it is consistent to have tree property on both $\aleph_2$ and $\aleph_3$.

\item[(6)] (Magidor) The consistency of tree property on both $\aleph_2$ and $\aleph_3$ implies the consistency of "$0^\sharp$ \textit{exists}".

\item[(7)] (Cummings - Foremann) Assuming the existence of an $\omega$-sequence of supercompact cardinals, it is consistent that the tree property holds for all $\aleph_{n}$'s, $1<n<\omega$.

 \end{proposition}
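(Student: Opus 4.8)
The plan is to treat the seven clauses separately, since each is an independent classical theorem of the author named; the easy ones I would prove outright and the hard ones I would only sketch, referring to the original papers. Clause (1) is immediate from K\"onig's infinity lemma: an $\aleph_0$-tree has height $\omega$ and finite levels, hence is infinite and finitely branching, so it has an infinite --- i.e.\ cofinal --- branch. For (2) and (3) the plan is to build the required Aronszajn trees by recursion on levels. For (2) one constructs an $\aleph_1$-tree whose nodes at level $\alpha$ are strictly increasing, bounded sequences of rationals of length $\alpha$; at successor steps one adds all one-point end-extensions, and at limit steps a back-and-forth/density argument extends each node already built while keeping every level countable; a cofinal branch would produce a strictly increasing $\omega_1$-sequence of rationals, so there is none. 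Clause (3) is Specker's generalization: assuming $\kappa^{<\kappa}=\kappa$ one runs the analogous recursion to produce a \emph{special} $\kappa^+$-Aronszajn tree --- one that is a union of $\kappa$ antichains --- the cardinal-arithmetic hypothesis being exactly what keeps the levels of size $\le\kappa$ while every node still extends; the assertion about $\aleph_2$ follows by taking $\kappa=\aleph_1$ and using $\aleph_1^{<\aleph_1}=2^{\aleph_0}=\aleph_1$ under $CH$.

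Clause (4) splits into a forcing direction and a reversal. For the consistency of the tree property at $\aleph_2$ from a weakly compact $\kappa$ (Mitchell), I would force with the Mitchell poset over a ground model in which $\kappa$ is weakly compact: an interleaving of Cohen forcing on $\omega$ with $\omega_1$-closed collapses that turns $\kappa$ into $\aleph_2$, is tame enough (via a quotient/projection analysis together with closure/properness arguments) to preserve $\omega_1$ and all cardinals $\ge\kappa$, and for which, given a name for a $\kappa$-tree, a weakly compact embedding $j$ with critical point $\kappa$ --- applied to the tree, with a quotient-forcing argument to place the branch --- yields a cofinal branch in the extension. For the reversal I would show that the tree property at $\aleph_2$ implies that $\aleph_2$ is weakly compact in $L$; contrapositively, the $\square$-principles that hold in $L$ give, by Jensen's construction, a special Aronszajn tree on $\aleph_2$, and specialness --- hence being Aronszajn --- is upward absolute.

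Clauses (5) and (7) are the deeper consistency results, and here the plan is to describe rather than carry out the constructions. For (5) (Abraham) one starts from a supercompact $\kappa$ with a weakly compact $\lambda>\kappa$, first makes the supercompactness indestructible by a Laver-type preparation, and then performs a two-step Mitchell-style collapse so that $\kappa$ becomes $\aleph_2$ and $\lambda$ becomes $\aleph_3$; the weak compactness of $\lambda$ handles potential $\aleph_3$-Aronszajn trees as in (4), while the prepared supercompactness of $\kappa$ kills the $\aleph_2$-trees simultaneously via a reflection and branch-catching argument. Clause (7) (Cummings--Foreman) iterates this along an $\omega$-sequence $\langle\kappa_n : n<\omega\rangle$ of supercompacts: after Laver preparation one interleaves collapses making $\kappa_n$ into $\aleph_{n+2}$, so that the $\aleph_n$ for $1<n<\omega$ are exactly the images of the $\kappa_n$, and at each level one anticipates every potential Aronszajn tree and arranges, using the supercompact embeddings and term-forcing (branch-catching) machinery, that its would-be branch is supplied without re-creating Aronszajn trees at other levels. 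For the lower bound (6) (Magidor), the plan is inner-model-theoretic: if the tree property held at both $\aleph_2$ and $\aleph_3$ while $0^\sharp$ did not exist, then Jensen's covering lemma over $L$ would apply, and from the $\square$-sequences available in $L$ together with the correct computation of successor cardinals that covering provides one extracts a special Aronszajn tree on $\aleph_2$ or $\aleph_3$, contradicting the tree property; hence $0^\sharp$ exists in every model of the hypothesis, which yields the consistency implication.

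The main obstacle, and the reason clauses (5) and (7) can only be sketched, is the simultaneous preservation of the tree property at several successive cardinals: one needs a delicate bookkeeping of the iteration so that, level by level, every $\aleph_n$-tree that might appear is anticipated and its would-be cofinal branch is supplied --- either because it is already present after a suitable quotient, or because a supercompact embedding produces it --- all without introducing new Aronszajn trees at the other $\aleph_m$. Making the Laver preparation, the system of collapses, and the branch-catching (term-forcing) arguments mesh is the genuinely hard part and is the technical heart of Cummings--Foreman; for the purposes of this paper we simply quote these theorems.
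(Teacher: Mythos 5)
Your proposal is correct in substance, but it takes a different (much more detailed) route than the paper, whose entire proof of this proposition is a set of citations: (1)--(3) to Jech \cite{jech}, (4) to Mitchell \cite{mitchell}, (5)--(6) to Abraham \cite{abraham}, and (7) to Cummings--Foreman \cite{foremann}. You instead prove the classical clauses outright --- K\"onig's lemma for (1), the increasing-bounded-sequences-of-rationals construction for (2), Specker's special $\kappa^+$-Aronszajn tree from $\kappa^{<\kappa}=\kappa$ for (3) --- and give genuine outlines of the forcing direction and $L$-reversal for (4) and of the Abraham, Magidor and Cummings--Foreman arguments for (5)--(7), ultimately quoting the original papers for the hard preservation arguments exactly where the paper itself quotes them. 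What your version buys is self-containedness for the elementary items and an honest indication of where the difficulty lies (simultaneous branch-catching without creating new Aronszajn trees); what the paper's version buys is brevity, which is appropriate here since the proposition is pure background and none of its proofs are used later. Two small points of care if you keep your sketch: in the reversal of (4), ``the $\square$-principles that hold in $L$'' should be split into the case that $\aleph_2$ is a successor cardinal of $L$ (Jensen's $\square_\lambda$ gives a special tree whose specialness is upward absolute) and the case that it is inaccessible but not weakly compact in $L$ (where one needs the $\square(\kappa)$-type construction), and in (6) the argument via the covering lemma is the one recorded in Abraham's paper \cite{abraham}, where the correct computation of $L$-successors supplied by covering is what lets the $L$-square sequences produce a special Aronszajn tree at $\aleph_2$ or $\aleph_3$ in $V$; your sketch states this at the right level of precision but should cite that source rather than leave it implicit.
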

 \begin{proof}
For (1), (2), (3) see \cite{jech}. (4) is proved in \cite{mitchell}. For (5) and (6) see \cite{abraham}. The result (7) is proved in \cite{foremann}.
 \end{proof}

An importnat point about the Aronszajn's result in proposition \ref{results} is the essential use of AC in his construction. Thus the existing $\aleph_1$ - Aronszajn tree is \textit{not} definable. Amir Leshem \cite{leshem} proved that assuming existence of a $\Pi_{1}^{1}$ - reflecting cardinal, it is consistent that a definable version of tree property (definition \ref{tree property}) holds on $\aleph_1$.
\begin{definition}\label{reflecting cardinals}
An inaccessible cardinal $\kappa$ is $\Pi_{n}^{m}$ - reflecting, if for every $A\subseteq V_{\kappa}$ definable over $V_{\kappa}$ with parameters from $V_{\kappa}$ and for every $\Pi_{n}^{m}$ - sentence $\Phi$, if $(V_{\kappa}, \in, A)\models \Phi$ then there is an $\alpha<\kappa$ such that $(V_{\alpha}, \in, A\cap V_{\alpha})\models \Phi$.
\end{definition}

\begin{definition}\label{tree property}
Let $\kappa$ be a regular cardinal. A $\kappa$ - tree $(T, <_T)$ is definable if its underlying set is $\kappa$, and the relation $<_T$ is $\Sigma_n$ - definable in the structure $(H_{\kappa}, \in)$ for some natural number $n$.  We say the definable tree property holds on $\kappa$ if every definable $\kappa$ - tree has a cofinal branch.
\end{definition}
\begin{remark}\label{different forms of definable tree property}
In his paper \cite{leshem}, Leshem considers several variants of definable tree property, including what he calls definable tree property in the strict, wide and very wide sense. His results are about definable tree property in the strict sense which is exactly what we stated in the definition \ref{tree property}. According to Leshem's definitions, every definable $\kappa$ - tree in the strict sense is definable in the wide sense and every definable $\kappa$ - tree in the wide sense is definable in the very wide sense. Also every definable $\kappa$ - tree $(T, <_T)$ in the wide sense is isomorphic to a $\kappa$ - tree $(\kappa, <^*)$ that is definable in the strict sense. So it follows that without losing generality one can assume that the definable tree property in the strict and wide sense are identical while the definable tree property in the very wide sense is different from them.
\end{remark}
\begin{theorem}\label{leshem's main result} (Leshem)
The following statements are equiconsistent:
\begin{enumerate}
\item[(1)] The definable tree property holds on $\aleph_1$.
\item[(2)] There is a $\Pi_{1}^{1}$ - reflecting cardinal.
\end{enumerate}
\end{theorem}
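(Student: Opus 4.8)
Below I sketch how I would attack Leshem's equiconsistency; I treat the two directions separately. In both, the engine is that ``$T$ is an Aronszajn tree'' is a genuinely $\Pi^1_1$ property of a tree $T$ — a single second-order quantifier ``for every $b\subseteq\operatorname{field}(T)$, $b$ is not a cofinal branch'' over a first-order matrix. The upper bound exploits this upward, through reflection after a collapse; the lower bound exploits it downward, through a non-reflecting instance inside $L$.

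\textbf{From $(2)$ to $(1)$.} The plan is to Lévy-collapse a $\Pi^1_1$-reflecting cardinal $\kappa$ to $\aleph_1$: force with $\Col(\omega,{<}\kappa)$ to obtain $V[G]$, in which $\kappa=\aleph_1$, and prove that there every definable $\aleph_1$-tree has a cofinal branch. Suppose towards a contradiction that $T$ is a definable $\aleph_1$-Aronszajn tree in $V[G]$, with ${<}_T$ being $\Sigma_n$ over $(H_{\aleph_1}^{V[G]},\in)$ via a formula $\varphi$ and a parameter $p$; after normalising $T$ as permitted by Remark~\ref{different forms of definable tree property} I may assume its field is $\kappa$ and its first $\xi$ levels form an initial segment of the ordinals. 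I would then record: (i) since $\Col(\omega,{<}\kappa)$ has the $\kappa$-chain condition, has size $\kappa$, and $\kappa$ is inaccessible, the parameter $p$ and every bounded restriction $T\restriction\xi$ have nice names in $V_\kappa$, and ${<}_T$ is uniformly computed over $(V_\kappa,\in,G)$ from $\varphi$, $\dot p$ and the forcing relation of $\Col(\omega,{<}\kappa)$, which is itself definable over $V_\kappa$; and (ii) for a club of inaccessible $\xi<\kappa$ one has $H_\xi^{V[G\restriction\xi]}\prec_{\Sigma_n}H_{\aleph_1}^{V[G]}$ (where $G\restriction\xi:=G\cap\Col(\omega,{<}\xi)$), so $T\restriction\xi\in V[G\restriction\xi]$ and there it is precisely the tree defined by $\varphi,p$, an $\aleph_1$-tree of $V[G\restriction\xi]$. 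Transferring the $\Pi^1_1$ assertion ``$T$ is $\aleph_1$-Aronszajn'' through the names of (i) into a $\Pi^1_1$ sentence over $(V_\kappa,\in,A)$ for a suitable $A$ definable over $V_\kappa$ from $\dot p$, the $\Pi^1_1$-reflection of $\kappa$ produces a reflected statement at some inaccessible $\xi<\kappa$ lying in the club of (ii); read off, this says that the reflected (definable) tree $T\restriction\xi$ is $\aleph_1$-Aronszajn over $V[G\restriction\xi]$. The remaining work is to derive a contradiction from this, using that $T$ is a genuine $\kappa$-tree and hence has nodes on level $\xi$ whose predecessor-sets are cofinal branches of $T\restriction\xi$ in $V[G]$: one must control, via the homogeneity and explicit structure of the tail forcing $\Col(\omega,[\xi,\kappa))$, whether such a branch is already present in $V[G\restriction\xi]$. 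I expect this to be the main obstacle, and the one point where the full strength of $\Pi^1_1$-reflection (beyond mere weak compactness) is consumed; it is a refinement, to the definable setting at $\aleph_1$, of the Silver--Mitchell argument of Proposition~\ref{results}(4).

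\textbf{From $(1)$ to $(2)$.} Here the plan is to show that the definable tree property on $\aleph_1$ forces $\aleph_1^V$ to be $\Pi^1_1$-reflecting in $L$, which immediately yields $\operatorname{Con}(\ZFC+(2))$. So assume the definable tree property on $\aleph_1$ and put $\kappa=\aleph_1^V$, a regular $L$-cardinal. First, $\kappa$ must be inaccessible in $L$: otherwise $\kappa$ is at worst a successor cardinal of $L$, but $L$ carries at every successor cardinal a canonical $\Sigma_1$-over-$L_\kappa$ (special) Aronszajn tree (Jensen), and since $L_\kappa\subseteq H_{\aleph_1}^V$ this relativises to a definable $\aleph_1$-Aronszajn tree of $V$ — a contradiction. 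Second, $\kappa$ must be $\Pi^1_1$-reflecting in $L$: if not, fix inside $L$ a set $A\subseteq V_\kappa^L=L_\kappa$ definable over $L_\kappa$ and a $\Pi^1_1$ sentence $\Phi$ with $(L_\kappa,\in,A)\models\Phi$ while $(V_\xi^L,\in,A\cap V_\xi^L)\not\models\Phi$ for every $\xi<\kappa$; from $(A,\Phi)$, after putting $\Phi$ into a suitable $\Pi^1_1$ normal form (so that the matrix becomes $\Pi_1$ and ``partial counterexamples restrict downwards''), build a tree $\mathcal T\in L$ whose level-$\xi$ nodes code partial counterexamples to $\Phi$ over $V_\xi^L$, ordered by restriction. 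Then $\mathcal T$ has height $\kappa$ (as $\Phi$ fails at every level and the partial counterexamples cohere), countable levels in $V$ (as $\kappa$ inaccessible in $L$ gives $|V_\xi^L|^L<\kappa$, hence $\mathcal P(V_\xi^L)^L$ countable in $V$), and — relativising to $L$ — is definable over $H_{\aleph_1}^V$. The obstacle is to show $\mathcal T$ has no cofinal branch \emph{in $V$} (not merely in $L$): a would-be $V$-branch must, by the absoluteness of the $\Pi_1$ matrix, close up to a genuine counterexample to $\Phi$ over $(L_\kappa,\in,A)$ — this, together with engineering the normal form so that partial counterexamples form a tree at all, is exactly where the syntactic shape of $\Pi^1_1$ is exploited. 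Given this, $\mathcal T$ is a definable $\aleph_1$-Aronszajn tree of $V$, contradicting the hypothesis; hence $\kappa=\aleph_1^V$ is $\Pi^1_1$-reflecting in $L$, which (witnessed inside $L$) gives the consistency of (2).
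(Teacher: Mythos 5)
The paper offers no proof of this theorem beyond the citation to Leshem, so the comparison has to be with the mechanism that Leshem actually uses and that the surrounding preliminaries of this paper encode (Propositions \ref{extension and end extension}, \ref{reflecting and extension}, \ref{reflecting and tree property} and the lifting argument in the proof of Lemma \ref{preservation of reflecting cardinals under small forcings}). Measured against that, your $(2)\Rightarrow(1)$ direction has a genuine gap, and it sits exactly at the point you flag as ``the main obstacle.'' Your plan is to reflect the $\Pi^1_1$ statement ``$T$ is Aronszajn'' \emph{downward} to some $\xi<\kappa$ and then contradict it using the branch of $T\restriction\xi$ given by a node of $T$ on level $\xi$. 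But that branch lives in $V[G]$, while the reflected statement only asserts that $T\restriction\xi$ has no cofinal branch coded in $V_\xi$ (equivalently, in $V[G\restriction\xi]$). The tail forcing $\Col(\omega,[\xi,\kappa))$ collapses $\xi$ to $\omega$, so it adds cofinal branches to $T\restriction\xi$ essentially for free; no branch-preservation lemma is available here (this is the same reason the tree property at $\aleph_1$ fails outright), and so no contradiction arises. The working argument goes \emph{upward}: convert $\Pi^1_1$-reflection into the extension property via Proposition \ref{reflecting and extension}, take a $\Sigma_n$-elementary end extension $(V_\kappa,\in)\prec_n(X,\in)$ with $\kappa\in X$ and $X\in V$, lift it through the collapse to $(V_\kappa[G],\in)\prec_n(X[G],\in)$ exactly as in the proof of Lemma \ref{preservation of reflecting cardinals under small forcings}, and observe that the tree defined by the same formula in $X[G]$ end-extends $T$ and has a node on level $\kappa$; its set of predecessors is a cofinal branch of $T$, and it lies in $X[G]\subseteq V[G]$ because $X$ is a \emph{set in $V$}, not an outer generic extension. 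This is a direct construction of a branch, not a proof by contradiction, and it is where the hypothesis is actually consumed.

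Your $(1)\Rightarrow(2)$ direction is essentially the right outline and matches Lemma \ref{tree property in V and reflecting in L} (Leshem's Theorem 5.1): inaccessibility of $\aleph_1^V$ in $L$ via Jensen's special Aronszajn trees, which are $L$-definable and remain Aronszajn in $V$ because the specializing function injects any branch into a countable set; then reflection via an $L$-definable tree of approximations with levels countable in $V$. But the step you flag there is also not cosmetic: a cofinal branch of your tree of ``partial counterexamples'' produced \emph{in $V$} yields a counterexample set $B\subseteq L_\kappa$ lying in $V$, whereas the hypothesis to be contradicted is that $(L_\kappa,\in,A)\models\Phi$ \emph{as evaluated in $L$}, where the second-order quantifier ranges only over $L$-subsets. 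To make a $V$-branch bite, one should instead build the tree of Henkin-style approximations to a $\Sigma_n$-elementary end extension of $(L_\kappa,\in)$ (the Keisler--Silver characterization behind Proposition \ref{extension and end extension}); a $V$-branch then still produces a structure whose existence transfers back appropriately, rather than a raw $\Pi^1_1$ counterexample that may simply fail to exist in $L$. In short: both directions should be routed through the extension-property characterization rather than through direct manipulation of $\Pi^1_1$ reflection.
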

\begin{proof}
\cite{leshem}.
\end{proof}
In section 2 we generalize Leshem's result to the consistency of definable tree property for proper class of all successors of regular cardinals using the existence of proper class many $\Pi_{1}^{1}$ - reflecting cardinals, a large cardinal assumption weaker than the existence of a Mahlo cardinal and much weaker than what is theoretically expected for achieving tree property in the usual sense for this class of regular cardinals.

\begin{main theorem}\label{main theorem}
The following statements are equiconsistent:
\begin{enumerate}
\item[(1)] The definable tree property on successor of every regular cardinal.
\item[(2)] There are proper class many $\Pi_{1}^{1}$ - reflecting cardinals.
\end{enumerate}
\end{main theorem}

The situation for the consistency of holding tree property at successor of a singular cardinal is generally more complicated than the case of regulars. By a result of Magidor and Shelah \cite{magidor} it is known that if $\lambda$ is the singular limit of $\lambda^+$ - supercompact cardinals then $\lambda^+$ has the tree property. This fact is used by them to prove the consistency of tree property on $\aleph_{\omega+1}$ from a very strong large cardinal assumption. Later Sinapova \cite{sinapova} decreased the necessary large cardinal assumption for proving the consistency of tree prperty on $\aleph_{\omega+1}$ to the existence of $\omega$ - many supercompact cardinals.

On the other hand, answering an old question of Woodin, Neeman \cite{neeman} produced, assuming the existence of $\omega$-many supercompact cardinals, a model in which $SCH$ fails at a singular strong limit cardinal $\kappa$ of cofinality $\omega$ and $\kappa^+$ has the tree property.  But in Neeman's model, $GCH$ fails cofinally often below $\kappa$, and it is still an open problem if we can have a singular cardinal $\kappa$ such that $GCH$ holds below $\kappa$, $2^\kappa > \kappa^+$, and $\kappa^+$ has the tree property.

In section 3 we prove the main theorem \ref{definable tree property at a singular} which gives an affirmative answer to this question if the tree property is replaced with the definable tree property. Our proof also reduces the large cardinal strength from the existence of infinitely many supercompact cardinals to the existence of a supercompact cardinal and a measurable above it.

\begin{main theorem}\label{definable tree property at a singular} 
Assume $GCH$ holds, $\kappa$ is supercompact and $\lambda >\kappa$ is measurable. Then there is a generic extension of the universe in which:
\begin{enumerate}
\item[(1)] $\kappa$ is a strongly limit singular cardinal of cofinality $\omega$,
\item[(2)] No bounded subsets of $\kappa$ are added, in particular $GCH$ holds below $\kappa$,
\item[(3)] $\lambda=\kappa^+$ and the definable tree property holds at $\lambda$,
\item[(4)] $2^\kappa = |j(\lambda)|$, in particular if (in $V$) $|j(\lambda)|> \lambda^+$, then $SCH$ fails at $\kappa$.
\end{enumerate}
\end{main theorem}

The generic extension in which the above theorem holds is essentially the extension obtained by supercompact extender based Prikry forcing introduced by Merimovich in \cite{mer4}. 

Our results show that the \textit{definable} version of tree property is so different in nature from its original form and needs much weaker large cardinal assumptions for proving its consistency.

\section{Definable tree property at successor of all regular cardinals}
The entire argument in this section is for proving the main theorem \ref{main theorem}.
\subsection{From definable tree property to reflecting cardinals} \label{section for from definable tree property to reflecting cardinals}
In this subsection we prove the (1) to (2) part of the main theorem \ref{main theorem} by showing that assuming definable tree property for successors of regular cardinals in $V$, $\Pi_{1}^{1}$ - reflecting cardinals form an unbounded subclass of cardinals in $L$ (theorem \ref{from tree property to reflecting}). First let's review some facts and definitions from \cite{leshem}.

\begin{definition}\label{extension property}
A cardinal $\kappa$ has the extension property if and only if for every natural number $n$ and for every set $A\subseteq V_{\kappa}$ definable over $V_{\kappa}$ with parameters from $V_{\kappa}$, there is a transitive set $X$, and a subset $A^X$ of $X$ such that $\kappa\in X$ and $(V_{\kappa}, \in, A)\prec_{n} (X, \in, A^X)$.
\end{definition}

\begin{proposition}\label{extension and end extension}
For a cardinal $\kappa$ the following statements are equivalent:
\begin{enumerate}
\item[(1)] $\kappa$ has the extension property.
\item[(2)] For every natural number $n$, there is a transitive set $X$ which $\kappa\in X$ and the structure $(X,\in)$ is a $\Sigma_n$ - elementary end extension of $(V_{\kappa}, \in)$.
\end{enumerate}
\end{proposition}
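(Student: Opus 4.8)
The plan is to prove the two implications separately. The direction $(1)\Rightarrow(2)$ is essentially immediate from the definition, while $(2)\Rightarrow(1)$ amounts to a routine syntactic ``unfolding'' of the definable predicate $A$ into the pure membership language; the only step that genuinely requires care is the bookkeeping of L\'evy complexities, which I isolate at the end.

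\textbf{From (1) to (2).} Fix $n\in\omega$ and apply the extension property to the trivially definable set $A=\emptyset$ (say $A=\{x\in V_\kappa:x\neq x\}$). This produces a transitive set $X$ with $\kappa\in X$, a subset $A^X\subseteq X$, and $(V_\kappa,\in,\emptyset)\prec_n(X,\in,A^X)$. Since $\{\in\}$-formulas do not mention the predicate symbol interpreted by $A$, restricting to them yields $(V_\kappa,\in)\prec_n(X,\in)$ with $V_\kappa\subseteq X$. It then remains only to observe that this elementary extension is automatically an \emph{end} extension: $X$ is transitive, so its membership relation is honest $\in$, and $V_\kappa$ is transitive, so whenever $a\in V_\kappa$, $b\in X$ and $b\in a$ we already have $b\in V_\kappa$ (because $a\subseteq V_\kappa$). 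Hence no new elements are added below old ones, which is exactly the end-extension requirement, and $(X,\in)$ is a $\Sigma_n$-elementary end extension of $(V_\kappa,\in)$.

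\textbf{From (2) to (1).} Fix $n\in\omega$ and a set $A\subseteq V_\kappa$ definable over $V_\kappa$ with parameters, say $A=\{x\in V_\kappa:(V_\kappa,\in)\models\psi(x,\bar p)\}$ with $\psi$ of L\'evy complexity $\Sigma_k$ and $\bar p\in V_\kappa$. Choose $m$ large enough in terms of $n$ and $k$ (see below) and invoke (2) at $m$ to obtain a transitive set $X$ with $\kappa\in X$ and $(V_\kappa,\in)\prec_m(X,\in)$. Put $A^X=\{x\in X:(X,\in)\models\psi(x,\bar p)\}$. Since $m\ge k$ the extension is $\Sigma_k$- and $\Pi_k$-elementary, so $A^X\cap V_\kappa=A$, and hence the substructure of $(X,\in,A^X)$ induced on $V_\kappa$ is exactly $(V_\kappa,\in,A)$. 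To check $\prec_n$, let $\varphi$ be any $\Sigma_n$-formula in the language $\{\in,\dot A\}$ and $\bar a\in V_\kappa$; let $\varphi^{*}(\bar v,\bar p)$ be the $\{\in\}$-formula gotten from $\varphi$ by replacing every atomic subformula $\dot A(t)$ with $\psi(t,\bar p)$. Then
\begin{align*}
(V_\kappa,\in,A)\models\varphi(\bar a) &\iff (V_\kappa,\in)\models\varphi^{*}(\bar a,\bar p)\\
 &\iff (X,\in)\models\varphi^{*}(\bar a,\bar p)\\
 &\iff (X,\in,A^X)\models\varphi(\bar a),
\end{align*}
where the two outer equivalences merely unwind the definitions of $A$ and of $A^X$, and the middle one is $\Sigma_m$-elementarity (legitimate because the complexity of $\varphi^{*}$ is at most $m$ and $\bar a,\bar p\in V_\kappa$). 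Thus $(V_\kappa,\in,A)\prec_n(X,\in,A^X)$, which is the extension property.

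\textbf{The delicate step.} The one point not reducible to unwinding definitions is the assertion that $\varphi^{*}$ may be taken of L\'evy complexity bounded by a function of $n$ and $k$ alone, so that a \emph{single} $m$ suffices uniformly for all $\Sigma_n$-formulas $\varphi$. The subtlety is that $\dot A$ can occur under negations; I would deal with this by first putting $\varphi$ in negation normal form, then substituting $\psi$ (which is $\Sigma_k$) for the positive occurrences of $\dot A(t)$ and $\neg\psi$ (which is $\Pi_k$) for the negative ones, and finally merging adjacent like quantifiers. A straightforward induction on the build-up of $\varphi$ then shows $\varphi^{*}$ is, say, $\Sigma_{n+k}$ (possibly better, depending on the precise conventions for the L\'evy hierarchy), so one may simply take $m=n+k$. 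With that in hand, the rest of the argument is mechanical.
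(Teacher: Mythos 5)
The paper itself gives no argument here (it simply cites Leshem), but your outline is exactly the intended one: $(1)\Rightarrow(2)$ by specializing to a trivial predicate and observing that an elementary extension between transitive sets is automatically an end extension (correct as you wrote it), and $(2)\Rightarrow(1)$ by absorbing the definable predicate into the formula via substitution. The first direction and the substitution equivalences, including $A^X\cap V_\kappa=A$, are fine.

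The soft spot is precisely the step you flag, and as written it is not quite right under the standard reading of $\Sigma_n$ in this context, namely the L\'evy hierarchy, where a $\Sigma_n$ formula is a block prefix of $n$ alternating unbounded quantifiers over a $\Delta_0$ matrix that may contain arbitrarily many nested \emph{bounded} quantifiers, with $\dot A(t)$ occurring arbitrarily deep inside them. Your recipe (negation normal form, substitute $\psi$/$\neg\psi$, merge like quantifiers) only bounds the complexity when the matrix is quantifier-free; when $\psi$ is substituted underneath nested bounded quantifiers, pulling its unbounded quantifiers past them is not a purely logical equivalence (it is exactly where collection is needed), so the L\'evy complexity of $\varphi^{*}$ is not bounded by $n+k$ uniformly over all $\Sigma_n(\dot A)$ formulas $\varphi$, and then no single $m$ is justified by your computation. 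The repair is standard but should be said: in the intended applications $\kappa$ is inaccessible (Proposition \ref{reflecting and extension} pairs the extension property with inaccessibility), so $V_\kappa\models\mathrm{ZFC}$; each instance of $\Sigma_i$-collection for $i\le k$ is a single sentence of complexity bounded by a function of $k$ alone, hence transfers to $X$ once $m$ exceeds that bound, and with these collection instances available in \emph{both} structures every $\varphi^{*}$ is equivalent there to a $\Sigma_{n+k+c}$ formula for an absolute constant $c$; taking $m=n+k+c$ then makes your middle equivalence legitimate. (Alternatively, one may declare the crude prenex convention for $\prec_n$, under which your bound is correct; but that choice should be made explicit, and note that for a completely arbitrary cardinal $\kappa$, where $V_\kappa$ need not satisfy collection, the bookkeeping genuinely requires one of these fixes.)
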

\begin{proof}
\cite{leshem}.
\end{proof}

\begin{proposition}\label{reflecting and extension}
For a cardinal $\kappa$ the following statements are equivalent:
\begin{enumerate}
\item[(1)] $\kappa$ is $\Pi_{1}^{1}$ - reflecting.
\item[(2)] $\kappa$ is inaccessible and has the extension property.
\end{enumerate}
\end{proposition}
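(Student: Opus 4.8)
The plan is to prove both implications; since ``$\kappa$ is inaccessible'' is part of the hypothesis of (1) by Definition \ref{reflecting cardinals} and is assumed outright in (2), the content is the equivalence, for an inaccessible $\kappa$, of $\Pi^1_1$-reflection and the extension property.

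I would do $(2)\Rightarrow(1)$ first. Let $A\subseteq V_\kappa$ be definable over $V_\kappa$ with parameters and let $\Phi=\forall X\,\phi(X)$ be $\Pi^1_1$ with $(V_\kappa,\in,A)\models\Phi$, and suppose toward a contradiction that $\Phi$ fails in $(V_\alpha,\in,A\cap V_\alpha)$ for every $\alpha<\kappa$. Since $\kappa$ is inaccessible (so $\mathcal P(V_\beta)\in V_\kappa$ for $\beta<\kappa$) and the satisfaction relation for set structures is definable over $V_\kappa$, the first-order statement
\[
\Psi:\qquad\forall\beta\;\exists Z\subseteq V_\beta\ \big[\,(V_\beta,\in,A\cap V_\beta,Z)\models\neg\phi\,\big]
\]
is expressible over $(V_\kappa,\in,A)$, and by assumption it holds there. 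Now apply the extension property with $n$ large enough that $\Psi$ is $\Sigma_n$ and that the fragment of $\mathrm{ZF}$ needed below is captured at level $n$: this gives a transitive $X^*\ni\kappa$ and $A^*\subseteq X^*$ with $(V_\kappa,\in,A)\prec_n(X^*,\in,A^*)$, where $X^*$ satisfies a strong fragment of $\mathrm{ZF}$ because $V_\kappa\models\mathrm{ZFC}$ and $(V_\kappa,\in)\prec_n(X^*,\in)$. As $X^*$ is transitive with $V_\kappa\subseteq X^*$ and $\kappa\in X^*$, the extension is automatically an end extension (this is the content behind Proposition \ref{extension and end extension}), hence $V_\kappa^{X^*}=V_\kappa$ and $A^*\cap V_\kappa=A$ by preservation of atomic formulas and of the rank hierarchy. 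Then $(X^*,\in,A^*)\models\Psi$; instantiating the ordinal quantifier at $\kappa$ yields $Z\in X^*$ with $Z\subseteq V_\kappa$ and ``$(V_\kappa,\in,A,Z)\models\neg\phi$'' as computed inside $X^*$. Because satisfaction for set structures is absolute between transitive models of a sufficient fragment of $\mathrm{ZF}$, this holds in $V$ as well, so $Z$ witnesses $(V_\kappa,\in,A)\models\neg\Phi$, a contradiction.

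For $(1)\Rightarrow(2)$ I would use Proposition \ref{extension and end extension} to reduce to producing, for each $n$, a transitive $\Sigma_n$-elementary end extension of $(V_\kappa,\in)$ containing $\kappa$; the predicate form of Definition \ref{extension property} then follows, since a definable $A$ is coded by a formula plus a parameter $p\in V_\kappa$, so $\Sigma_m$-elementarity over $(V_\kappa,\in)$ for large enough $m$ already yields $(V_\kappa,\in,A)\prec_n(X^*,\in,A^*)$ with $A^*$ defined over $X^*$ by the same formula. Take $A$ to be the $\Sigma_n$-satisfaction relation of $(V_\kappa,\in)$, definable over $V_\kappa$. Working inside $V_\kappa$, a model of $\mathrm{ZFC}$, the reflection theorem yields a club $\mathcal C\subseteq\kappa$ with $V_\alpha\prec_n V_\kappa$ for $\alpha\in\mathcal C$; for such $\alpha$, $A\cap V_\alpha$ is the $\Sigma_n$-satisfaction relation of $V_\alpha$, and any $\alpha'\in\mathcal C$ with $\alpha<\alpha'<\kappa$ makes $V_{\alpha'}$ a transitive $\Sigma_n$-elementary end extension of $V_\alpha$ containing $\alpha$. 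So the assertion ``there is a transitive $\Sigma_n$-elementary end extension adding a new ordinal'' is true in $(V_\alpha,\in,A\cap V_\alpha)$ for every $\alpha\in\mathcal C$. The idea is then: if $(V_\kappa,\in)$ had no such extension, one would encode ``$A$ is the $\Sigma_n$-satisfaction relation and I have no transitive $\Sigma_n$-elementary end extension adding an ordinal'' as a $\Pi^1_1$ sentence $\Phi$; then $(V_\kappa,\in,A)\models\Phi$, so $\Pi^1_1$-reflection gives $\alpha<\kappa$ --- which one arranges to lie in $\mathcal C$ --- with $(V_\alpha,\in,A\cap V_\alpha)\models\Phi$, contradicting the previous paragraph.

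The hard part, and the main obstacle, is making $\Phi$ genuinely $\Pi^1_1$. The naive formulation of ``there is no transitive --- i.e.\ well-founded --- $\Sigma_n$-elementary end extension'' quantifies universally over candidate codes $R\subseteq V_\kappa$ and must assert that each $R$ is either ill-founded --- a $\Sigma^1_1$ condition --- or not a correct $\Sigma_n$-elementary end extension --- first-order --- i.e.\ it has the shape $\forall R\,(\Sigma^1_1\vee\text{first-order})$, which is only $\Pi^1_2$. To remain at $\Pi^1_1$ one must not range over abstract extensions at all: instead one manufactures a possibly ill-founded $\Sigma_n$-elementary end extension adding a new ordinal directly --- for instance by a Barwise-style compactness argument over the admissible structure $(V_\kappa,\in,A)$ --- and then uses $\Pi^1_1$-reflection only to force the ordinals of that canonical model to be genuinely well-founded, a demand which for this specific model can, after suitable preparation, be expressed by a $\Pi^1_1$ statement about $(V_\kappa,\in,A)$; the well-founded part of the model then contains $\kappa$ together with a new ordinal and is the desired transitive end extension. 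Pinning down this last encoding, and checking that the reflected instance lands in $\mathcal C$, is the technical heart of the argument, and is carried out in \cite{leshem}.
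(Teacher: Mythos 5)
Your direction $(2)\Rightarrow(1)$ is fine: it is the standard argument (express the failure of the $\Pi^1_1$ sentence at every $\alpha<\kappa$ as a first-order statement over $(V_\kappa,\in,A)$ using inaccessibility and definability of set-satisfaction, push it into the end extension, instantiate the ordinal quantifier at $\kappa$, and pull the witness back by absoluteness), and the routine points you wave at (end-extension is automatic for a transitive $X^*\supseteq V_\kappa$, $A^*\cap V_\kappa=A$, enough ZF in $X^*$ for large $n$) are indeed routine. Note, though, that the paper itself offers no argument for this proposition at all --- its ``proof'' is the citation to Leshem's Theorem 3.2 --- so there is no in-paper proof for your sketch to diverge from.

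The genuine gap is in $(1)\Rightarrow(2)$, and you have in effect conceded it: after correctly diagnosing that the naive ``no transitive $\Sigma_n$-elementary end extension'' statement is only $\Pi^1_2$, your proposed repair is not substantiated and its key mechanism is doubtful. First, a ``Barwise-style compactness argument over the admissible structure $(V_\kappa,\in,A)$'' is not available here: Barwise compactness/completeness is a theorem about \emph{countable} admissible sets, and for an inaccessible $\kappa$ the $\Sigma_1$-compactness of $V_\kappa$ is essentially equivalent to weak compactness of $\kappa$ --- far above a $\Pi^1_1$-reflecting cardinal, which need not even be Mahlo. So you have not produced even the canonical ill-founded $\Sigma_n$-elementary end extension your plan starts from. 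Second, the claim that $\Pi^1_1$-reflection can then ``force the ordinals of that canonical model to be genuinely well-founded'' is asserted, not argued, and it runs against the grain of how reflection operates: reflection transfers a \emph{true} $\Pi^1_1$ statement about $(V_\kappa,\in,A)$ down to some $(V_\alpha,\in,A\cap V_\alpha)$; it does not certify well-foundedness of a given class-sized relation over $V_\kappa$, and you give no encoding that makes the relevant demand $\Pi^1_1$. Since you end by deferring exactly this point to \cite{leshem}, your attempt, taken as a standalone proof, establishes only one implication; the other is precisely the ``technical heart'' that Leshem's Theorem 3.2 (the paper's sole reference for this proposition) supplies, and any complete write-up would either reproduce his argument or find a genuinely $\Pi^1_1$ formulation (e.g.\ by reflecting a statement about a definable tree or structure of approximations) rather than appeal to compactness phenomena that fail at this consistency strength.
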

\begin{proof}
\cite{leshem} theorem 3.2.
\end{proof}

\begin{proposition}\label{reflecting and tree property}
The definable tree property holds on every $\Pi_{1}^{1}$ - reflecting cardinal.
\end{proposition}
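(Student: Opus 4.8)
The plan is to adapt, to the setting of the extension property, the classical argument showing that a weakly compact cardinal has the tree property: there one pushes a $\kappa$-tree through an elementary embedding with critical point $\kappa$ and reads off a cofinal branch from (the predecessors of) a node at level $\kappa$. Let $(T,<_T)$ be a definable $\kappa$-tree. Since $\kappa$ is inaccessible, $H_\kappa=V_\kappa$, so by Definition~\ref{tree property} the relation $<_T$ is defined over $(V_\kappa,\in)$ by a $\Sigma_n$ formula $\varphi(x,y)$ (possibly with a parameter from $V_\kappa$), and the underlying set of $T$ is $\kappa$ itself. By Proposition~\ref{reflecting and extension}, $\kappa$ is inaccessible and has the extension property, so by Proposition~\ref{extension and end extension}, for each natural number $m$ there is a transitive set $X$ with $\kappa\in X$ such that $(X,\in)$ is a $\Sigma_m$-elementary end extension of $(V_\kappa,\in)$; I would fix such an $X$ for a value of $m$ large enough for the absoluteness facts recorded below.

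I would then transfer the tree upward: in $X$, let $<_T^X$ be the relation defined by $\varphi$, and for an ordinal $\beta$ of $X$ let $T^X_\beta$ be the set of nodes whose set of $<_T^X$-predecessors has order type $\beta$. Choosing $m$ large enough that the relevant assertions are $\Sigma_m$ or $\Pi_m$ and hence absolute between $V_\kappa$ and $X$, one verifies: \textbf{(i)} $<_T$ and $<_T^X$ agree on $V_\kappa$, and for $x\in V_\kappa$ the $<_T^X$-predecessors of $x$ are exactly its $<_T$-predecessors (the set $\{y:y<_T x\}$ is a specific element of $V_\kappa$, and the sentence identifying it as $\{y:\varphi(y,x)\}$ lifts to $X$); hence \textbf{(ii)} $T^X_\alpha=T_\alpha\subseteq V_\kappa$ for every $\alpha<\kappa$; and \textbf{(iii)} $X$ believes that $<_T^X$ is a tree order in which every ordinal is the level of some node, so, since $\kappa\in\On^X$, there is a node $b$ of $<_T^X$ at level $\kappa$.

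It remains to extract the branch. Let $C$ be the set of $<_T^X$-predecessors of $b$. Since $b$ sits at $X$-level $\kappa$, the chain $C$ has order type $\kappa$ and every element of $C$ has $X$-level $<\kappa$; by \textbf{(ii)} each such element lies in $T_\alpha$ for some $\alpha<\kappa$ and is therefore a node of the original tree, and by \textbf{(i)} $C$ is linearly ordered by $<_T$. For each $\alpha<\kappa$, the unique $X$-level-$\alpha$ predecessor of $b$ lies in $C\cap T^X_\alpha=C\cap T_\alpha$, so $C$ meets every level of $T$; thus $C$ is a cofinal branch of $T$. Finally $C$ is defined from the sets $b$, $X$, $<_T^X$, all of which belong to $V$, so $C\in V$; this shows the definable tree property holds at $\kappa$.

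The only real work is the bookkeeping in the second paragraph: isolating a single $m$ for which ``being a tree order'', the recursive definition of the level function, the identification of each predecessor set $\{y:y<_T x\}$ and each level $T_\alpha$ as a concrete element of $V_\kappa$, and the statement ``every ordinal is the level of some node'' are all $\Sigma_m$ (or $\Pi_m$) with the fixed parameter, so that $\Sigma_m$-elementarity of the end extension yields \textbf{(i)}--\textbf{(iii)}. Once these are in place the branch extraction is immediate; note in particular that one never needs to reflect the clause that the levels of $T$ have size $<\kappa$, only the identities $T^X_\alpha=T_\alpha$ for $\alpha<\kappa$ and the existence of a level-$\kappa$ node.
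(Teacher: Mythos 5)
Your proof is correct. The paper gives no argument of its own here---it simply cites Leshem's Lemma 3.3---so the only comparison available is with the standard proofs of that lemma. There are two essentially dual routes: (a) apply $\Pi^1_1$-reflection directly, i.e.\ if a definable $\kappa$-tree $T$ had no cofinal branch, reflect the $\Pi^1_1$ sentence ``$<_T$ is a tree ordering of the ordinals with all levels nonempty and no cofinal branch'' over $(V_\kappa,\in,<_T)$ down to some $(V_\alpha,\in,<_T\cap V_\alpha)$ and contradict it with the branch consisting of the predecessors of any node of level $\alpha$; or (b) factor through the equivalence with the extension property (Propositions \ref{extension and end extension} and \ref{reflecting and extension}) and push the tree into a $\Sigma_m$-elementary end extension to find a node ``at level $\kappa$.'' You take route (b), which is exactly the weakly-compact/elementary-embedding argument adapted to definable trees, and it stays entirely within the toolkit the paper has already set up; route (a) is shorter if one is willing to check that the reflecting $\alpha$ can be taken in the club of $\alpha$ for which $<_T\cap V_\alpha$ really is $T\restriction\alpha$, a point your version avoids by instead doing the absoluteness bookkeeping for the level function between $V_\kappa$ and the end extension. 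You correctly identify that bookkeeping (your items (i)--(iii)) as the only substantive work, and your handling of it---using that each predecessor set and each level $T_\alpha$ is an element of $V_\kappa$ whose defining identity transfers to the transitive end extension, so no new nodes appear at levels below $\kappa$---is sound. The final observation that the branch is built from objects of $V$ and hence lies in $V$ closes the argument.
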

\begin{proof}
\cite{leshem} lemma 3.3.
\end{proof}

\begin{lemma}\label{tree property in V and reflecting in L}
Let $\kappa$ be a successor of a regular cardinal, if $\kappa$ has the definable tree property in $V$ then $\kappa$ is $\Pi_{1}^{1}$ - reflecting in $L$.
\end{lemma}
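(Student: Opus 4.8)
The plan is to show the contrapositive flavor of the statement by working inside $L$: assuming $\kappa = \mu^+$ with $\mu$ regular and $\kappa$ has the definable tree property in $V$, I want to deduce that $\kappa$ is $\Pi^1_1$-reflecting in $L$. By Proposition \ref{reflecting and extension} this is equivalent to showing that in $L$ the cardinal $\kappa$ is inaccessible and has the extension property. Inaccessibility of $\kappa$ in $L$ is the easy half: if $\kappa$ were not inaccessible in $L$, then in $L$ (hence in $V$, since $L$ computes these correctly for successor cardinals of the right form) one could run a Specker-style or Aronszajn-style construction to build a \emph{definable} $\kappa$-tree with no cofinal branch — the point being that in $L$ one has a $\Sigma_1$-definable global well-ordering, so the classical Aronszajn/Specker tree construction (which normally invokes AC) becomes outright definable over $(H_\kappa, \in)$, contradicting the definable tree property. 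So the real content is the extension property.

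For the extension property, I would argue by contradiction: suppose $\kappa$ fails the extension property in $L$. Then by Proposition \ref{extension and end extension} there is some natural number $n$ such that $(V_\kappa^L, \in) = (L_\kappa, \in)$ has \emph{no} transitive $\Sigma_n$-elementary end extension containing $\kappa$. The idea is to convert this failure into a definable $\kappa$-Aronszajn tree. Following Leshem's framework, the nodes of the tree at level $\alpha$ should code (via the canonical $L$-well-order, so everything stays definable over $H_\kappa$) the $\Sigma_n$-elementary end extensions of $(L_\alpha, \in)$, or more precisely finite/bounded approximations and condensation-style ``attempts'' to build such an end extension; a branch through the tree then assembles a genuine transitive $\Sigma_n$-elementary end extension of $L_\kappa$ of height $\geq \kappa$, which is exactly what we assumed does not exist. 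Because $L_\kappa \models \mathrm{ZF}^-$ plus ``$\mu$ is the largest cardinal'' and $\mu$ is regular, the tree is genuinely a $\kappa$-tree (levels of size $< \kappa$, height $\kappa$) — here is where $\kappa$ being a successor of a regular cardinal is used, to control the widths of the levels.

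**The hard part will be** making the tree both (a) genuinely branchless assuming the failure of the extension property, and (b) honestly definable over $(H_\kappa^V, \in)$ rather than merely over $(L_\kappa, \in)$ — one must check that the $V$-sets $H_\kappa$ and $L$-theoretic objects like $L_\alpha$ and the $L$-order cohere in a $\Sigma_n$-definable way, and that ``no cofinal branch'' is not accidentally violated by some branch living in $V \setminus L$. This is precisely the subtlety Leshem handles for $\aleph_1$, and the generalization to $\kappa = \mu^+$ requires redoing the bookkeeping with $\mu$ in place of $\aleph_0$; I expect this to go through essentially verbatim from \cite{leshem} once one notes that his argument only used $\aleph_1 = \aleph_0^+$ through the regularity of $\aleph_0$ and the fact that $H_{\aleph_1}$ codes $L_{\omega_1}$. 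So I would structure the proof as: (i) reduce via Propositions \ref{reflecting and extension} and \ref{extension and end extension} to inaccessibility-in-$L$ plus the end-extension property; (ii) dispatch inaccessibility by the definable Aronszajn construction using the $L$-order; (iii) for the end-extension property, build the tree of approximate $\Sigma_n$-elementary end extensions, verify it is a definable $\kappa$-tree, observe a cofinal branch would yield the forbidden end extension, and conclude. The one place to be careful is step (ii)'s interface with the case $\mu = \aleph_0$ versus $\mu$ uncountable, but in both cases the $L$-well-order makes the obstruction definable.
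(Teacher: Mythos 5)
Your proposal follows exactly the route the paper intends: the paper's proof of this lemma is just the one-line remark that it is ``similar to the proof of Theorem 5.1 in \cite{leshem}'', and your outline --- reducing via Propositions \ref{reflecting and extension} and \ref{extension and end extension} to inaccessibility-in-$L$ plus the end-extension property, getting inaccessibility from a definable (via the canonical $L$-well-order) Aronszajn/Specker tree, and getting the end-extension property from a definable tree of approximations whose cofinal branch assembles the forbidden $\Sigma_n$-elementary end extension --- is a faithful reconstruction of that argument generalized from $\aleph_1$ to $\mu^+$. One small point worth making explicit when you write it up: the end extension produced by a $V$-branch must be seen to lie in $L$, which follows by condensation since for large enough $n$ it is a transitive $\Sigma_n$-elementary extension of a model of $V=L$ and hence is itself a level of $L$.
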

\begin{proof}
Similar to the proof of theorem 5.1. in \cite{leshem}.
\end{proof}

\begin{theorem}\label{from tree property to reflecting}
If the definable tree property holds for proper class many regular cardinals in $V$ then there are proper class many $\Pi_{1}^{1}$ - reflecting cardinals in $L$.
\end{theorem}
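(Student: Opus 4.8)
The plan is to prove the slightly stronger statement that \emph{every} uncountable cardinal $\kappa$ which is regular in $V$ and carries the definable tree property in $V$ is already $\Pi^{1}_{1}$ - reflecting in $L$; the theorem then follows at once, since by hypothesis there is a proper class of such $\kappa$. (If one reads the hypothesis in the form used in the Main Theorem \ref{main theorem} — the definable tree property at the successor of every regular cardinal — the theorem is even more immediate, being Lemma \ref{tree property in V and reflecting in L} applied to each $\mu^{+}$, as $\{\mu^{+}:\mu\text{ regular}\}$ is a proper class; but the argument below does not use this reading.)

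So fix such a $\kappa$. Regularity and being a cardinal are downward absolute, so $\kappa$ is a regular cardinal in $L$, and hence in $L$ it is either a successor cardinal or inaccessible. Suppose first $\kappa=(\gamma^{+})^{L}$. Inside $L$, where $\GCH$ and the global square principle hold, one builds from the canonical $\square_{\gamma}$ - sequence and the canonical well-order a \emph{special} $\kappa$ - Aronszajn tree $T$ whose tree order $<_{T}$ is definable over $L_{\kappa}=(H_{\kappa})^{L}$. Now $L_{\kappa}\subseteq (H_{\kappa})^{V}$, and membership in $L_{\kappa}$ is $\Sigma_{1}$ over $((H_{\kappa})^{V},\in)$, so $<_{T}$ is $\Sigma_{n}$ - definable over $((H_{\kappa})^{V},\in)$ for some $n$; and since $T$ is special in $L$ it has no cofinal branch in any outer model of $L$, in particular none in $V$. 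Thus $T$ witnesses a failure of the definable tree property at $\kappa$, a contradiction, so this case cannot occur, and $\kappa$ is inaccessible in $L$. If $\kappa$ failed the extension property of Definition \ref{extension property} in $L$, then by Proposition \ref{extension and end extension} some natural number $n$ would have no $\Sigma_{n}$ - elementary transitive end-extension of $(V_{\kappa})^{L}=L_{\kappa}$ containing $\kappa$, and the construction behind Lemma \ref{tree property in V and reflecting in L} (Leshem's Theorem 5.1) converts this into a $\kappa$ - Aronszajn tree definable over $L_{\kappa}$, hence, as above, definable over $(H_{\kappa})^{V}$ and branchless in $V$, again contradicting the definable tree property at $\kappa$. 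Therefore $\kappa$ is inaccessible in $L$ and has the extension property in $L$, so by Proposition \ref{reflecting and extension} it is $\Pi^{1}_{1}$ - reflecting in $L$.

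The steps that remain are routine checking rather than new ideas: the absoluteness bookkeeping ($\kappa$ regular, a cardinal, inaccessible all pass down to $L$); the transfer of definability over $L_{\kappa}$ to $\Sigma_{n}$ - definability over $(H_{\kappa})^{V}$, using that $L_{\kappa}$ is a $\Sigma_{1}$ - definable subset there; and the fact that the trees produced are special, so that they remain Aronszajn in $V$. The one genuinely substantive ingredient is the construction inside $L$, from the failure of the extension property, of a \emph{definable} $\kappa$ - Aronszajn tree; this is exactly what Lemma \ref{tree property in V and reflecting in L} and, behind it, Leshem's Theorem 5.1 supply, and I expect it to be the only real obstacle, so I would simply cite them rather than reproduce the fine-structural tree construction.
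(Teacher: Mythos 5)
Your proposal is correct and follows essentially the same route as the paper: the paper's proof is exactly the reduction you note in your opening parenthetical --- apply Lemma \ref{tree property in V and reflecting in L} to a suitable cardinal above any proposed bound on the $\Pi^{1}_{1}$ - reflecting cardinals of $L$ --- and, like you, it leaves the substantive content (Leshem's Theorem 5.1) as a citation. Your unpacking of that lemma (special trees in the $L$-successor case, the extension-property tree otherwise) is a faithful sketch of what lies behind the citation; just be aware that the paper states the lemma only for successors of regular cardinals, so your strengthening to arbitrary uncountable regular $\kappa$ rests on the unverified claim that Leshem's construction of a branchless definable tree from the failure of the extension property goes through at such $\kappa$ as well.
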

\begin{proof}
Assume that $\Pi_{1}^{1}$ - reflecting cardinals in $L$ are bounded below a cardinal $\lambda$. There is a regular cardinal $\kappa > \lambda$ such that definable tree property holds for $\kappa$ in $V$. By lemma \ref{tree property in V and reflecting in L}, $\kappa$ is a $\Pi_{1}^{1}$ - reflecting cardinal in $L$ greater than $\lambda$, a contradiction.
\end{proof}

\subsection{From reflecting cardinals to definable tree property}\label{section for from reflecting cardinals to definable tree property}

In this subsection we are going to prove the (2) to (1) part of the theorem \ref{main theorem} using an Easton reverse iteration of Levy collapses of reflecting cardinals (theorem \ref{from reflecting to tree property}). At the first setp we need to prove that small forcings preserve the $\Pi_{1}^{1}$ - reflecting cardinals.

\begin{lemma}\label{preservation of reflecting cardinals under small forcings}
If $\kappa$ is a $\Pi_{1}^{1}$ - reflecting cardinal and $\mathbb{P}$ is a notion of forcing which $|\mathbb{P}|<\kappa$ then $\kappa$ remains $\Pi_{1}^{1}$ - reflecting in $V^{\mathbb{P}}$.
\end{lemma}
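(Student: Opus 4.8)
The plan is to reflect a violation of $\Pi^1_1$-reflecting-ness down through the small forcing. Suppose toward a contradiction that in $V^{\MPB}$ the cardinal $\kappa$ fails to be $\Pi^1_1$-reflecting. Since $|\MPB|<\kappa$ and $\kappa$ is inaccessible in $V$, it remains inaccessible in $V^{\MPB}$ (small forcing preserves inaccessibility), so the failure must be a failure of the extension property by Proposition \ref{reflecting and extension}: there is some $n$ and some $A\subseteq V_\kappa^{V^{\MPB}}$, definable over $V_\kappa^{V^{\MPB}}$ with parameters in $V_\kappa^{V^{\MPB}}$, such that for every transitive $X$ with $\kappa\in X$, $(V_\kappa^{V^{\MPB}},\in,A)\not\prec_n(X,\in,A^X)$.

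First I would observe that because $|\MPB|<\kappa$, we have $V_\kappa^{V^{\MPB}}=(V_\kappa^V)^{V^{\MPB}}$ and, more importantly, $\mathcal P(V_\kappa)^{V^{\MPB}}$ is obtained from $\mathcal P(V_\kappa)^V$ by a forcing of size $<\kappa$; every such $A$ has a $\MPB$-name $\dot A\in V_\kappa^V$ (since $\MPB$ and its nice names for subsets of $V_\kappa$ all lie in $V_\kappa$). Working in $V$, code the relevant data — the poset $\MPB$, the name $\dot A$, the parameters, and $n$ — into a single set $B\subseteq V_\kappa$ that is definable over $V_\kappa^V$. Now apply the extension property of $\kappa$ in $V$ to $B$ (for a sufficiently large level of elementarity, say $n'\ge n$ chosen to absorb the forcing-relation quantifiers): there is a transitive $X\in V$ with $\kappa\in X$ and $(V_\kappa^V,\in,B)\prec_{n'}(X,\in,B^X)$.

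Next I would force over $X$ with $\MPB$ (which is legitimate since $\MPB\in V_\kappa\subseteq X$ and, by elementarity, $X$ sees $\MPB$ as a poset) using the restriction of the $V$-generic $G$; let $X[G]$ be the extension inside $V^{\MPB}$. The key point is that $\Sigma_{n'}$-elementarity of an end extension is preserved under a common small forcing: since $(V_\kappa,\in)$ and $(X,\in)$ agree below $\kappa$, the forcing $\MPB$ is literally the same in both, the forcing relation $\Vdash_\MPB$ for $\Sigma_{n'}$-formulas is itself $\Sigma_{n'}$-definable (Levy reflection for the forcing language), and so $(V_\kappa[G],\in)\prec_n(X[G],\in)$, with $A=\dot A^G$ now interpreted coherently on both sides as $B^X$ decodes to the corresponding name and its interpretation. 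Thus $(V_\kappa^{V^{\MPB}},\in,A)\prec_n(X[G],\in,A^{X[G]})$ with $\kappa\in X[G]$ transitive, contradicting the assumed failure of the extension property in $V^{\MPB}$. By Proposition \ref{reflecting and extension} again, $\kappa$ is $\Pi^1_1$-reflecting in $V^{\MPB}$.

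The main obstacle is the bookkeeping in the step that transfers $\Sigma_{n'}$-elementarity of the end extension $V_\kappa\prec_{n'}X$ to $V_\kappa[G]\prec_n X[G]$: one must verify that the definability of the forcing relation lets a $\Sigma_n$-statement about $V_\kappa[G]$ be reduced to a $\Sigma_{n'}$-statement about $V_\kappa$ (with $\dot A$ and $\MPB$ as parameters, which is why they were coded into $B$), and conversely, so that elementarity passes through; this is where the precise choice of $n'$ as a function of $n$ matters and where the "end extension" character (so that nothing new below $\kappa$, in particular no new elements of $\MPB$ or new antichains, appears in $X$) is essential. Everything else is routine once one notes that $|\MPB|<\kappa$ forces all the auxiliary objects to live below $\kappa$.
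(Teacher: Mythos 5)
Your proposal is correct and follows essentially the same route as the paper: preserve inaccessibility, reduce to the extension property, apply the extension property in $V$ at a sufficiently high level $\Sigma_{n'}$ with the (small, hence in $V_\kappa$) forcing data as a predicate, and transfer elementarity to $V_\kappa[G]\prec_n X[G]$ via the definability of the forcing relation. The only differences are cosmetic --- you argue by contrapositive and carry a name $\dot A$ for the predicate explicitly (note a nice name for $A\subseteq V_\kappa$ is a subset of $V_\kappa$, not an element, as your coding into $B$ implicitly acknowledges), whereas the paper sidesteps $A$ entirely by invoking Proposition \ref{extension and end extension} to reduce to predicate-free $\Sigma_n$-elementary end extensions in $V[G]$.
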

\begin{proof}
Assume that $\kappa$ is a $\Pi_{1}^{1}$ - reflecting cardinal and $|\mathbb{P}|<\kappa$. As small forcings preserve inaccessibility of $\kappa$, by proposition \ref{reflecting and extension} it suffices to show that $\kappa$ has the extension property in $V[G]$. Using the equivalence in proposition \ref{extension and end extension} it suffices to show that in $V[G]$ for every natural number $n$, there is a transitive set $Y$ such that $\kappa\in Y$ and the structure $(Y,\in)$ is a $\Sigma_n$ - elementary end extension of $(V_{\kappa}, \in)$. Note that by smallness of forcing notion we have $V_{\kappa}^{V[G]}=V_{\kappa}[G]$. Thus it is sufficient to show that for every natural number $n$, there is a transitive set $Y\in V[G]$ which $\kappa\in Y$ and the structure $(Y,\in)$ is a $\Sigma_n$ - elementary end extension of $(V_{\kappa}[G], \in)$.

Fix the natural number $n$, without losing generality we may assume that the forcing notion $\mathbb{P}$ in $V$ is defined by a formula of complexity $\Sigma_{m}$. Choose the sufficiently large natural number $t\geq m, n$. By extension property of $\kappa$ in $V$ as a $\Pi_{1}^{1}$ - reflecting cardinal, we get a transitive set $X\in V$ and a set $\mathbb{P}^X\subseteq X$ such that $\kappa\in X$ and the structure $(X, \in, \mathbb{P}^X)$ is a $\Sigma_{t}$ - elementary extension of $(V_{\kappa}, \in, \mathbb{P})$. In fact $\mathbb{P}^X=\mathbb{P}$ because by elementary extension the structure $(X, \in, \mathbb{P}^X)$ is agree with $(V_{\kappa}, \in, \mathbb{P})$ on the notion of $\in$.

Now we show that $(V_{\kappa}[G], \in)\prec_{n}(X[G], \in)$ which completes the proof because $X[G]$ is a transitive set in $V[G]$ with our required property for $Y$. In order to do this fix a first order $\Sigma_{n}$-formula $\varphi (x_1,\cdots, x_n)$. We have $V[G]\models \varphi (a_1,\cdots, a_n)$ iff $\exists p\in G~~~p\Vdash_{\mathbb{P}}^{V}\varphi (\dot{a}_1,\cdots, \dot{a}_n)$. Note that by smallness of forcing we may assume that $\mathbb{P}\in V_{\kappa}$ and so we can consider the forcing relation $\Vdash^{V}$ as $\Vdash^{V_{\kappa}}$, thus the last statement is equivalent to $\exists p\in G~~~(V_{\kappa}, \in, \mathbb{P})\models p\Vdash_{\mathbb{P}}\varphi (\dot{a}_1,\cdots, \dot{a}_n)$. As $t$ was chosen sufficiently large we may assume that it exceeds the complexity of the formula  $p\Vdash_{\mathbb{P}}\varphi (\dot{a}_1,\cdots, \dot{a}_n)$ which is a $\Sigma_{s}$ - formula like $\psi_{\varphi} (p, \mathbb{P}, \dot{a}_1,\cdots, \dot{a}_n)$. Thus by $\Sigma_{t}$ - elementary extension, $\exists p\in G~~~(V_{\kappa}, \in, \mathbb{P})\models p\Vdash_{\mathbb{P}}\varphi (\dot{a}_1,\cdots, \dot{a}_n)$ holds iff $\exists p\in G~~~(X, \in, \mathbb{P})\models p\Vdash_{\mathbb{P}}\varphi (\dot{a}_1,\cdots, \dot{a}_n)$. Equivalently $X[G]\models \varphi (a_1,\cdots, a_n)$ which means $(V_{\kappa}[G], \in)\prec_{n}(X[G], \in)$ and so $\kappa$ is a $\Pi_{1}^{1}$ - reflecting cardinal in $V[G]$.
\end{proof}

We need to work with the notion of a weakly homogenous forcing that is defined as follows:

\begin{definition}\label{homogeneous forcing}
A notion of forcing $\mathbb{P}$ is called weakly homogeneous if and only if for every two conditions $p, q$ in $\mathbb{P}$ there is an automorphism $\pi$ of $\mathbb{P}$ such that $\pi (p)$ and $q$ are compatible.
\end{definition}

An important property of weakly homogeneous forcings is that they don't add any new definable set with parameters from the ground model.

\begin{lemma}\label{homogeneous forcing adds no definable}
Let $V[G]$ be a forcing extension of $V$ by a weakly homogeneous forcing notion and $S\in V[G]$ is a subset of $V$ definable in $V[G]$ using parameters from $V$. Then $S\in V$.
\end{lemma}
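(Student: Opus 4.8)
The plan is to use the standard ``mixing by automorphisms'' argument. Suppose $S \in V[G]$ is a subset of $V$ which is definable in $V[G]$ by a formula $\varphi(x,\bar{a})$ with parameters $\bar{a} \in V$; that is, for every $x \in V$ we have $x \in S$ if and only if $V[G] \models \varphi(x,\bar{a})$. Fix a name $\dot{S}$ for $S$ and a condition $p_0 \in G$ forcing that $\dot{S}$ is a subset of $\check{V}$ defined by $\varphi(\cdot,\check{\bar{a}})$. The key point is that for each $x \in V$, the statement $\varphi(\check{x},\check{\bar{a}})$ involves only ground-model parameters, so whether it holds in $V[G]$ depends only on the generic filter, and by weak homogeneity its truth value is decided by the empty condition below $p_0$ in the following sense.

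First I would show that for each fixed $x \in V$, either $p_0 \Vdash \varphi(\check{x},\check{\bar{a}})$ or $p_0 \Vdash \neg\varphi(\check{x},\check{\bar{a}})$. Indeed, suppose not: then there are $q_1, q_2 \leq p_0$ with $q_1 \Vdash \varphi(\check{x},\check{\bar{a}})$ and $q_2 \Vdash \neg\varphi(\check{x},\check{\bar{a}})$. By weak homogeneity (Definition \ref{homogeneous forcing}) there is an automorphism $\pi$ of $\mathbb{P}$ such that $\pi(q_1)$ and $q_2$ are compatible; let $r$ be a common extension. Since $\pi$ fixes all check-names, $\pi(q_1) \Vdash \varphi(\check{x},\check{\bar{a}})$, so $r \Vdash \varphi(\check{x},\check{\bar{a}})$; but also $r \leq q_2$ forces $\neg\varphi(\check{x},\check{\bar{a}})$, a contradiction. (Here one must be slightly careful that $\pi$ can be taken to fix $p_0$ or at least that $\pi(q_1) \leq $ something compatible with the setup — but since the parameters $\check{x},\check{\bar{a}}$ are all absolute under automorphisms, the argument goes through without needing $\pi$ to fix $p_0$; one just needs the contradiction at $r$.)

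Now I define, in $V$, the set $S' = \{ x \in V : p_0 \Vdash_{\mathbb{P}} \varphi(\check{x},\check{\bar{a}}) \}$. This is definable in $V$ using the forcing relation and the parameters $p_0, \mathbb{P}, \bar{a}$, all of which are in $V$, so $S' \in V$. Finally I would check $S' = S$: given $x \in V$, if $x \in S'$ then $p_0 \Vdash \varphi(\check{x},\check{\bar{a}})$ and since $p_0 \in G$ we get $V[G] \models \varphi(x,\bar{a})$, i.e.\ $x \in S$; conversely if $x \in S$ then $V[G] \models \varphi(x,\bar{a})$, so some $q \in G$ forces $\varphi(\check{x},\check{\bar{a}})$, whence $q \not\Vdash \neg\varphi(\check{x},\check{\bar{a}})$, and since $G$ contains $p_0$ which decides the statement, it must be that $p_0 \Vdash \varphi(\check{x},\check{\bar{a}})$, i.e.\ $x \in S'$. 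Therefore $S = S' \in V$, as desired.

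The only real subtlety — and the step I would treat most carefully — is the dichotomy claim that $p_0$ decides $\varphi(\check{x},\check{\bar{a}})$ for each ground-model $x$. This is exactly where weak homogeneity is used, and the argument is the classical one (as for $\mathrm{Col}$ or for symmetric collapses); everything else is bookkeeping with the forcing theorem and the definability of the forcing relation. I would also remark that the same proof shows more generally that $V[G]$ adds no new set of ordinals that is definable over $V[G]$ from ground-model parameters, which is the form in which the lemma will actually be applied.
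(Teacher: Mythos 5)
Your proof is correct and is exactly the standard weak-homogeneity argument: automorphisms fix check-names, so by the symmetry lemma a condition deciding $\varphi(\check{x},\check{\bar a})$ one way cannot be compatible (after applying an automorphism) with one deciding it the other way, whence $S=\{x: p_0\Vdash\varphi(\check{x},\check{\bar a})\}$ is definable in $V$. The paper itself offers no argument beyond a citation to Jech's \emph{Multiple Forcing} (Proposition 2.2), and what you wrote is precisely that cited proof, so there is nothing to compare further.
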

\begin{proof}
\cite{jech forcing} proposition 2.2.
\end{proof}

The next observation is that $\kappa^+$ - closed weakly homogeneous forcings preserve definable tree property on $\kappa^+$.

\begin{lemma}\label{preservation of definable tree property under closed homogeneous forcings}
If definable tree property holds on $\kappa^+$ and $\mathbb{P}$ is a $\kappa^+$ - closed weakly homogeneous notion of forcing then in $V^\mathbb{P}$, $\kappa^+$ has the definable tree property.
\end{lemma}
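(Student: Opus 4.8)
The plan is to show that every definable $\kappa^+$-tree of $V[G]$ is already a definable $\kappa^+$-tree of $V$, and then simply to quote the definable tree property in the ground model. The two hypotheses play cleanly separated roles: $\kappa^+$-closure guarantees that the structures relevant to "definable $\kappa^+$-tree" are not disturbed, while weak homogeneity (through Lemma \ref{homogeneous forcing adds no definable}) guarantees that a relation which is merely definable in $V[G]$ is actually captured by $V$.

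First I would record the standard consequences of $\kappa^+$-closure of $\mathbb{P}$: it adds no new subsets of $\kappa$, hence no new sequences of length $\le\kappa$ of ground-model elements; consequently $\kappa^+$ is preserved and $H_{\kappa^+}^{V[G]}=H_{\kappa^+}^V$. Write $H$ for this common structure $(H_{\kappa^+},\in)$. Now let $(T,<_T)$ with underlying set $\kappa^+$ be a definable $\kappa^+$-tree in $V[G]$, say $<_T$ is $\Sigma_n$-definable over $H$. Since $H$ is literally the same structure in $V$ and in $V[G]$ and satisfaction in a set model is absolute, the very same formula defines $<_T$ over $H$ in $V$; in particular $<_T\in V$ and $(T,<_T)$ is $\Sigma_n$-definable over $H$ there. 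If one reads the definition of "definable" liberally, allowing the defining formula to carry parameters which a priori lie in $V[G]$, this is the point at which weak homogeneity enters: $<_T\subseteq\kappa^+\times\kappa^+\subseteq V$ is a subset of $V$ definable in $V[G]$, so Lemma \ref{homogeneous forcing adds no definable} gives $<_T\in V$ directly.

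Next I would check that $(T,<_T)$ really is a $\kappa^+$-tree from the point of view of $V$. Being a tree, and the rank (level) function, are computed directly from $<_T$, hence are absolute. The height of $T$ is an ordinal computed from $<_T$, so it is the same ordinal in both models, namely $(\kappa^+)^{V[G]}=(\kappa^+)^V$ since $\kappa^+$ is preserved. Finally, for each $\alpha<\kappa^+$ the $\alpha$-th level is a subset of $\kappa^+$ of size $\le\kappa$ in $V[G]$; a surjection from $\kappa$ onto it is a $\kappa$-sequence of ordinals below $\kappa^+$, hence a $\kappa$-sequence of ground-model elements, hence lies in $V$, so that level already has size $\le\kappa$ in $V$. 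Thus $(T,<_T)$ is a definable $\kappa^+$-tree in $V$.

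To finish, I would apply the definable tree property in $V$ to get a cofinal branch $b\in V\subseteq V[G]$; being a cofinal $<_T$-chain is absolute (it is a property of the fixed set $b$ and the fixed relation $<_T$, together with the fixed level structure), so $b$ witnesses that $T$ has a cofinal branch in $V[G]$. As $T$ was arbitrary, $\kappa^+$ has the definable tree property in $V[G]$. There is no deep obstacle here; the only steps that require care are the two absoluteness observations — that $H_{\kappa^+}$ and the notion of "$\kappa^+$-tree" survive the forcing — which is exactly where $\kappa^+$-closure is used, and (under the liberal reading of definability) the appeal to weak homogeneity via Lemma \ref{homogeneous forcing adds no definable} to see that the tree relation descends to $V$.
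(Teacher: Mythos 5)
Your proof is correct and follows essentially the same route as the paper's: use $\kappa^+$-closure to get $H_{\kappa^+}^{V[G]}=H_{\kappa^+}^V$ and to preserve the tree structure and branches, use weak homogeneity via Lemma \ref{homogeneous forcing adds no definable} to pull $<_T$ down into $V$, and then apply the ground-model definable tree property. Your additional observation --- that under the strict reading of Definition \ref{tree property} the parameters already lie in $H_{\kappa^+}^V$ and absoluteness of satisfaction over this fixed set structure already yields $<_T\in V$, so homogeneity is only needed under a liberal reading of definability --- is a sharpening rather than a different argument, and your explicit check that the levels still have size $\le\kappa$ in $V$ is a detail the paper leaves implicit.
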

\begin{proof}
Assume the definable tree property holds on $\kappa^+$ in $V$ and $T$ is a $\kappa^+$ - tree in $V[G]$ which is definable in the structure $(H_{\kappa^+}^{V[G]}, \in)$. Thus there is a first order formula with parameters from $H_{\kappa^+}^{V[G]}$ which defines $T$. By $\kappa^+$ - closure of forcing we have $H_{\kappa^+}^{V[G]}=H_{\kappa^+}^{V}$ and so $T$ is definable in $V[G]$ with parameters from $V$. Thus by homogeneity of forcing $\mathbb{P}$ and lemma \ref{homogeneous forcing adds no definable}, $T\in V$.

$T$, $dom(<_T)$, $ran(<_T)$ are sets of ordinals. All these sets are definable in $V[G]$ and so lie in $V$. Since for homogeneous forcings every set of ordinals definable in $V[G]$ with parameters from $V$, then both $T$ and $<_T$ are definable in $V$ as well.

Now by $\kappa^+$ - closure property of forcing we know that cardinals $\leq \kappa^+$ are preserved and so $T$ is a $\kappa^+$ - tree in the ground model. Consequently by definable tree property for $\kappa^+$ in $V$, $T$ has a cofinal branch $b$ in $V$. Again by $\kappa^+$ - closure of forcing, $b$ is a cofinal branch for $T$ in the generic extension too. So in $V[G]$ the definable tree property holds on $\kappa^+$.
\end{proof}

\begin{lemma}\label{generalization of leshem's result}
Let $\kappa$ be a regular cardinal and $\lambda>\kappa$ is a $\Pi_{1}^{1}$ - reflecting cardinal, then in $V^{Col(\kappa, <\lambda)}$ we have $\kappa^+ = \lambda$ and the definable tree property holds on $\kappa^+$.
\end{lemma}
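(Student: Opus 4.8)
The plan is to adapt Leshem's argument (the direction of Theorem \ref{leshem's main result} producing the definable tree property from a $\Pi^1_1$-reflecting cardinal Levy-collapsed to $\aleph_1$), replacing $\omega$ throughout by the regular cardinal $\kappa$; the only feature of $\omega$ used there is regularity. First the routine facts about $P:=\Col(\kappa,<\lambda)$ and a generic $G$. Since $P$ is ${<}\kappa$-closed, cardinals $\le\kappa$ are preserved and $\kappa$ stays regular; since $\lambda$ is inaccessible (being $\Pi^1_1$-reflecting, by Proposition \ref{reflecting and extension}), $P$ has the $\lambda$-chain condition, so $\lambda$ is preserved; and every cardinal in $(\kappa,\lambda)$ is collapsed to $\kappa$, so $\kappa^+=\lambda$ in $V[G]$. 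A standard nice-name computation using the $\lambda$-chain condition and the inaccessibility of $\lambda$ gives $H_{\kappa^+}^{V[G]}=V_\lambda[G]:=\{\dot x^G:\dot x\in V_\lambda,\ \dot x\text{ a }P\text{-name}\}$. Observe also that $P\subseteq V_\lambda$ and that, viewed inside the model $(V_\lambda,\in)\models\ZFC$, the forcing $P$ is precisely the tame, ${<}\kappa$-closed, cofinality-preserving class forcing $\Col(\kappa,<\mathrm{Ord})$, for which $G$ is generic over $V_\lambda$.

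Now let $T$ be an arbitrary definable $\kappa^+$-tree in $V[G]$: its underlying set is $\kappa^+=\lambda$ and $<_T$ is defined over $H_{\kappa^+}^{V[G]}=V_\lambda[G]$ by a $\Sigma_n$-formula $\varphi$, possibly with a parameter $p\in V_\lambda[G]$. Fix a $P$-name $\dot p\in V_\lambda$ for $p$ and a condition $r\in G$ forcing that $\varphi(\cdot,\cdot,\dot p)$ defines, over the $\Col(\kappa,<\mathrm{Ord})$-extension, a tree of height $\mathrm{Ord}$ with nonempty levels of size $\le\kappa$. We must produce a cofinal branch of $T$ in $V[G]$, and for this we use that $\lambda$ is $\Pi^1_1$-reflecting. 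Let $A\subseteq V_\lambda$ be a predicate coding $\kappa$, $\dot p$, $r$ and the numeral $n$, and choose $m$ large compared with $n$ and with the fixed complexity of the forcing statements over $\ZFC$-models that occur below. By Proposition \ref{reflecting and extension} together with Proposition \ref{extension and end extension}, the structure $(V_\lambda,\in,A)$ has a transitive $\Sigma_m$-elementary end extension $(X,\in,A^X)$ with $\lambda\in X$; thus $V_\lambda$ is the rank-${<}\lambda$ part of the transitive set $X$, $A^X\cap V_\lambda=A$, and $\lambda<\mathrm{Ord}^X$.

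Let $P^X=\Col(\kappa,<\mathrm{Ord}^X)$ be the corresponding class forcing of $X$, so that $P=P^X\cap V_\lambda$ is an initial segment of $P^X$. Then $G$ is $P$-generic over $X$ (it meets every dense set lying in the transitive set $X\subseteq V$), hence generates a $P$-generic filter over $X$; since $X$ is a set we may build, inside $V[G]$, a filter $H$ generic over $X[G]$ for the tail forcing $\Col(\kappa,[\lambda,\mathrm{Ord}^X))$, and put $G'=G*H$, a $P^X$-generic filter over $X$. As $X$ is a set of $V$ and $G'\in V[G]$, the transitive set-model $X[G']$ is contained in $V[G]$. The key point is that the $\Sigma_m$-elementarity is preserved by these compatible generic extensions: for a formula $\psi$ of complexity well below $m$ and $a\in V_\lambda[G]$ with $P$-name $\dot a\in V_\lambda$, one has $V_\lambda[G]\models\psi(a)$ iff $X[G']\models\psi(a)$, by passing through the definability of the $\Col(\kappa,<\mathrm{Ord})$-forcing relation over $V_\lambda$ and $X$, the $\Sigma_m$-elementarity of $X$ over $V_\lambda$, and the fact that $\dot a^{G'}=\dot a^G$ since $\dot a\in V_\lambda$. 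Applying this with $\psi$ ranging over the low-complexity formulas expressing ``$x$ belongs to the $\beta$-th level of the tree defined by $\varphi(\cdot,\cdot,p)$'' for $\beta<\lambda$, and with $\psi=\varphi$ itself, we conclude that over $X[G']$ the formula $\varphi(\cdot,\cdot,p)$ defines a tree $T'$ of height $\mathrm{Ord}^X$ whose restriction to the levels below $\lambda$, tree order included, is exactly $T$. Since $\mathrm{Ord}^X>\lambda$, the tree $T'$ has a node $t$ of height $\lambda$; its set of $<_{T'}$-predecessors is a chain of $T'$ of order type $\lambda$, all of whose members lie in $T'\restriction\lambda=T$ with heights cofinal in $\lambda$, hence a cofinal branch of $T$. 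This branch lies in $X[G']\subseteq V[G]$, as required.

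I expect the main difficulty to lie precisely in the bookkeeping around the class forcing $\Col(\kappa,<\mathrm{Ord})$: one needs that it is tame over the $\ZFC$-models $V_\lambda$ and $X$ so that its forcing relation is uniformly definable of bounded complexity; that the $\Sigma_m$-elementarity $(V_\lambda,\in,A)\prec_m(X,\in,A^X)$ lifts to $V_\lambda[G]\prec X[G']$ for the compatible generics; and that the tail generic $H$ is arranged inside $V[G]$ so that the branch obtained in $X[G']$ genuinely belongs to $V[G]$. With these in hand the argument is a routine transcription of Leshem's proof for $\aleph_1$.
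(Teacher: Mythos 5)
Your plan follows the intended outline (the paper itself offers no details here beyond ``a straightforward modification of Leshem's proof''), but it contains a step that is not merely unjustified, it is impossible: the construction, inside $V[G]$, of a filter $H$ generic over $X[G]$ for the tail collapse $\Col(\kappa,[\lambda,\mathrm{Ord}^X))$, and hence of a $P^X$-generic $G'=G*H$ over $X$ lying in $V[G]$. Since $X$ end-extends $V_\lambda$, for each $\xi<\lambda$ the collection $D_\xi=\{q: \exists i<\kappa\ \ q(\lambda,i)=\xi\}$ is a dense subclass of the tail forcing, definable over $X[G]$ from $\xi,\lambda,\kappa$ (and its trace on any set-sized piece of the tail is an element of $X[G]$), so any filter that is generic over $X[G]$ must meet every $D_\xi$. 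Taking unions along such a filter produces in $V[G]$ a surjection from $\kappa$ onto $\lambda$; but $\lambda=(\kappa^+)^{V[G]}$, so no such $H$ exists in $V[G]$. ``$X$ is a set'' does not help: one cannot in general build generics over transitive set models of size $\geq\kappa^+$ inside $V[G]$, and here genericity over $X[G]$ is outright contradictory. Consequently the endgame ``pick $x$ at level $\lambda$ of the tree in $X[G']\subseteq V[G]$'' has no model $X[G']$ to take place in, and you flag this point as a difficulty in your last paragraph without resolving it.

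Any correct implementation must avoid producing a fully generic filter over $X$ inside $V[G]$. The workable variants of Leshem's argument do one of the following: work directly with the forcing relation of $X$'s Levy collapse (definable over $X$, transferred by the elementarity from $V_\lambda$) and a name for a node of level $\lambda$, extracting the branch in $V[G]$ from $G$ together with decisions made by conditions of the small part $\Col(\kappa,<\lambda)$ only; or force with the tail genuinely over $V[G]$, obtain the branch in $V[G][H]$, and then pull it back into $V[G]$ by a homogeneity/absoluteness argument; or first use weak homogeneity of a tail $\Col(\kappa,[\beta,\lambda))$ to see that $<_T$, whose defining parameter has a name of size $<\lambda$, already lies in an intermediate extension $V[G_\beta]$ where $\lambda$ is still $\Pi^1_1$-reflecting, and run the reflection argument there. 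Separately, note a subtlety special to $\kappa>\omega$ that your write-up glosses over: unlike the case $\kappa=\omega$, where conditions are finite and hence absolute, the end extension $X$ may contain new conditions of its own $\Col(\kappa,<\lambda)$ (for instance if $X$ believes $\mathrm{cf}(\lambda)<\kappa$), so the identifications $P=P^X\cap V_\lambda$ and ``$G$ is generic over $X$'' also require an argument rather than being automatic.
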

\begin{proof}
A straightforward modification of the proof of theorem \ref{leshem's main result}.
\end{proof}

\begin{theorem}\label{from reflecting to tree property}
If there are proper class many $\Pi_{1}^{1}$ - reflecting cardinals in $V$, then there is a generic extension of $V$ by a weakly homogeneous forcing such that $GCH$ holds and successor of every regular cardinal has the definable tree property.
\end{theorem}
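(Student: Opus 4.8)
The plan is to prove this by a reverse Easton class iteration of Levy collapses indexed by the $\Pi^1_1$-reflecting cardinals, feeding into the three preservation lemmas already proved. First, we may assume $\GCH$ holds in $V$: if not, force it by the standard reverse Easton iteration, which preserves every $\Pi^1_1$-reflecting cardinal $\kappa$ — below $\kappa$ that iteration is $\kappa$-c.c.\ of size $\le\kappa$ and definable over $V_\kappa$, so the argument of Lemma~\ref{preservation of reflecting cardinals under small forcings} adapts to show the extension property survives, while the part of the iteration at or above $\kappa$ is $\kappa^+$-closed and hence does not disturb $V_\kappa$. Now let $\langle\kappa_\alpha:\alpha\in\On\rangle$ be the increasing enumeration of $\{\omega\}$ together with all $\Pi^1_1$-reflecting cardinals, so $\kappa_0=\omega$ and each $\kappa_\alpha$ with $\alpha\ge 1$ is inaccessible. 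Let $\mathbb P=\langle\mathbb P_\gamma,\dot{\mathbb Q}_\gamma:\gamma\in\On\rangle$ be the reverse Easton (Easton support) class iteration in which $\dot{\mathbb Q}_\gamma$ is trivial unless $\gamma=\kappa_\alpha$ for some $\alpha$, in which case $\dot{\mathbb Q}_{\kappa_\alpha}$ names $\Col(\kappa_\alpha,<\kappa_{\alpha+1})$ as computed in $V^{\mathbb P_{\kappa_\alpha}}$. The nontrivial stages are exactly the $\kappa_\alpha$, and a stage $\kappa_\alpha$ with $\alpha$ a limit is a limit of inaccessibles, hence inaccessible, so the limit stages that carry nontrivial forcing below them are handled by direct limits; $\mathbb P$ is a definable class forcing and forcing with it preserves $\ZFC$. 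The final model is $V^{\mathbb P}$.

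Next I would record the forcing-theoretic bookkeeping. For each $\alpha$ one has the factorization $\mathbb P\cong\mathbb P_{\kappa_\alpha}*\Col(\kappa_\alpha,<\kappa_{\alpha+1})*\dot{\mathbb P}^{\mathrm{tail}}_\alpha$, where (i) $|\mathbb P_{\kappa_\alpha}|=\kappa_\alpha<\kappa_{\alpha+1}$ — for $\alpha$ a successor $\mathbb P_{\kappa_\alpha}$ itself factors as a forcing of size $\le\kappa_{\alpha-1}$ followed by $\Col(\kappa_{\alpha-1},<\kappa_\alpha)$, for $\alpha$ a limit it is a direct limit of posets of size $<\kappa_\alpha$, and for $\alpha=0$ it is trivial — so by Lemma~\ref{preservation of reflecting cardinals under small forcings} the cardinal $\kappa_{\alpha+1}$ is still $\Pi^1_1$-reflecting in $V^{\mathbb P_{\kappa_\alpha}}$, where $\kappa_\alpha$ also remains regular and (by standard arguments) cardinals and cofinalities $\ge\kappa_\alpha$ are preserved; and (ii) the first step of $\dot{\mathbb P}^{\mathrm{tail}}_\alpha$ is $\Col(\kappa_{\alpha+1},<\kappa_{\alpha+2})$, so $\dot{\mathbb P}^{\mathrm{tail}}_\alpha$ is forced to be $\kappa_{\alpha+1}$-closed (that is, $\kappa_\alpha^+$-closed) and weakly homogeneous. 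Since each $\dot{\mathbb Q}_\gamma$ is forced weakly homogeneous, $\mathbb P$ is weakly homogeneous. The iteration collapses each interval $(\kappa_\alpha,\kappa_{\alpha+1})$ onto $\kappa_\alpha$ and $(\omega,\kappa_1)$ onto $\omega$ and preserves everything else, so in $V^{\mathbb P}$ the cardinals are precisely $\omega$ and the $\kappa_\alpha$, with $\kappa_1=\aleph_1$ and $\kappa_{\alpha+1}=\kappa_\alpha^+$; a $\kappa_\alpha$ with $\alpha$ a limit has cofinality $\cf^V(\alpha)$ and so is regular only in the fixed-point case $\alpha=\kappa_\alpha$ regular, while every other $\kappa_\alpha$ with $\alpha\ge1$ is a successor cardinal. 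Hence every successor of a regular cardinal of $V^{\mathbb P}$ is of the form $\kappa_{\alpha+1}$ (with $\kappa_1$ the successor of $\omega$). Finally, induction along the factorizations gives $\GCH$ in $V^{\mathbb P_{\kappa_{\alpha+1}}}$ for every $\alpha$ — here one uses that $\Col(\omega,<\kappa_1)$ is $\kappa_1$-c.c.\ of size $\kappa_1$ and $\GCH$ holds in $V$, so that $2^{\aleph_0}=\aleph_1$ — and since each tail adds no bounded subsets of $\kappa_{\alpha+1}$, $\GCH$ holds in $V^{\mathbb P}$.

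The heart of the proof is the definable tree property, and here everything reduces to the known lemmas. Fix $\alpha$ and work in $W:=V^{\mathbb P_{\kappa_\alpha}}$; by (i), $\kappa_\alpha$ is regular and $\kappa_{\alpha+1}$ is $\Pi^1_1$-reflecting in $W$. Lemma~\ref{generalization of leshem's result}, applied in $W$ (for $\alpha=0$ this is exactly Leshem's Theorem~\ref{leshem's main result} for $\Col(\omega,<\kappa_1)$), yields that in $W^{\Col(\kappa_\alpha,<\kappa_{\alpha+1})}=V^{\mathbb P_{\kappa_{\alpha+1}}}$ we have $\kappa_{\alpha+1}=\kappa_\alpha^+$ and the definable tree property holds at $\kappa_{\alpha+1}$. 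It remains to push this through $\dot{\mathbb P}^{\mathrm{tail}}_\alpha$: its first step $\Col(\kappa_{\alpha+1},<\kappa_{\alpha+2})$ is a $\kappa_\alpha^+$-closed weakly homogeneous set forcing, so Lemma~\ref{preservation of definable tree property under closed homogeneous forcings} preserves the definable tree property at $\kappa_{\alpha+1}$, and the remainder of the tail is $\kappa_{\alpha+2}$-closed, hence $\kappa_{\alpha+1}^+$-closed, so it adds no subsets of $\kappa_{\alpha+1}$ and leaves $H_{\kappa_{\alpha+1}^+}$ — and with it the definable tree property at $\kappa_{\alpha+1}$ — untouched. Thus $\kappa_{\alpha+1}$, equivalently every successor of a regular cardinal, has the definable tree property in $V^{\mathbb P}$, and together with $\GCH$ and the weak homogeneity of $\mathbb P$ this proves the theorem.

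The main obstacle is not conceptual but technical: once Lemmas~\ref{preservation of reflecting cardinals under small forcings}, \ref{generalization of leshem's result} and \ref{preservation of definable tree property under closed homogeneous forcings} are available, the argument is essentially bookkeeping with factorizations. The delicate points are the usual ones for a proper-class reverse Easton iteration — checking that it preserves $\ZFC$ and has the stated closure, chain-condition and factorization properties at every stage, including the limit stages $\kappa_\alpha$ with $\alpha$ a limit — together with the preliminary reduction to $\GCH$ in $V$, which requires the mild strengthening of Lemma~\ref{preservation of reflecting cardinals under small forcings} to $\kappa$-c.c.\ forcings of size $\kappa$ that are definable over $V_\kappa$. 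The genuinely hard mathematical content, namely the extension of Leshem's construction from $\Col(\omega,<\kappa)$ to an arbitrary Levy collapse $\Col(\mu,<\kappa)$, has been isolated into Lemma~\ref{generalization of leshem's result} and is used here as a black box.
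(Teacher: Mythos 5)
There is a genuine gap, and it lies in your choice of the sequence $\langle\kappa_\alpha:\alpha\in\On\rangle$. You take the increasing enumeration of \emph{all} $\Pi^1_1$-reflecting cardinals, but the class of $\Pi^1_1$-reflecting cardinals is not closed (a limit of reflecting cardinals is typically singular, hence not even inaccessible), so this enumeration is not continuous. At a limit ordinal $\alpha$ put $\mu=\sup_{\beta<\alpha}\kappa_\beta$; in the generic situation $\mu$ is not reflecting and $\mu<\kappa_\alpha$. Your iteration is nontrivial only at the stages $\kappa_\gamma$, and the stage-$\kappa_\alpha$ forcing is $\Col(\kappa_\alpha,<\kappa_{\alpha+1})$, which collapses only the interval above $\kappa_\alpha$; nothing in the iteration ever touches the interval $(\mu,\kappa_\alpha)$. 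Since $\mathbb P_{\kappa_\alpha}$ has size $\mu$ and the tail from stage $\kappa_\alpha$ on is $\kappa_\alpha$-closed, every $V$-cardinal in $(\mu,\kappa_\alpha]$ survives into $V^{\mathbb P}$. In $V^{\mathbb P}$ the cardinal $\mu$ is singular of cofinality $\cf(\alpha)$, so $\mu^{+}$ is regular and $\mu^{++}$ (as well as the further successors inside the gap) is a successor of a regular cardinal that is \emph{not} of the form $\kappa_{\gamma+1}$. This refutes your claims that ``the cardinals are precisely $\omega$ and the $\kappa_\alpha$'' and that ``every successor of a regular cardinal of $V^{\mathbb P}$ is of the form $\kappa_{\alpha+1}$'', and at such cardinals nothing in your construction yields the definable tree property: no reflecting cardinal sits there, Lemma \ref{generalization of leshem's result} is never applied to them, and by Lemma \ref{tree property in V and reflecting in L} the definable tree property at $\mu^{++}$ would make $\mu^{++}$ reflecting in $L$, which you have no reason to expect. (Incidentally, the inference ``a limit of inaccessibles, hence inaccessible'' you use for limit stages is also false, though with your enumeration each $\kappa_\alpha$, $\alpha\ge1$, is reflecting and hence inaccessible for the trivial reason.)

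The paper avoids exactly this problem by choosing an increasing \emph{continuous} sequence in which only the successor-indexed $\kappa_{\alpha+1}$ are required to be reflecting, arranging (cutting the universe if necessary) that no limit-indexed $\kappa_\alpha$ is inaccessible, and then collapsing with $\Col(\kappa_\alpha^{+},<\kappa_{\alpha+1})$ at limit stages rather than $\Col(\kappa_\alpha,<\kappa_{\alpha+1})$: since the limit $\kappa_\alpha$ is singular, its successor $\kappa_\alpha^{+}$ is not a successor of a regular cardinal and need not carry the tree property, while the successor of the regular cardinal $\kappa_\alpha^{+}$ becomes the reflecting cardinal $\kappa_{\alpha+1}$, handled by Lemma \ref{generalization of leshem's result}; the resulting cardinal structure (the $\kappa_\alpha$'s together with $\kappa_\alpha^{+}$ for limit $\alpha$) leaves no uncovered successors of regulars. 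Your argument needs this two-case definition of the iterands (or an equivalent device); as written, the leftover intervals $(\mu,\kappa_\alpha)$ make the conclusion fail. A secondary point: your preliminary step of first forcing $\GCH$ is not needed in the paper's argument and, as you note, would require a strengthening of Lemma \ref{preservation of reflecting cardinals under small forcings} beyond the ``$|\mathbb P|<\kappa$'' hypothesis actually proved there; the paper instead gets $\GCH$ in the final model directly from the collapse iteration (Lemma \ref{properties of forcing}).
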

\begin{proof}
Let $\langle \kappa_\alpha: \alpha \in Ord \rangle$ be an increasing continuous sequence of cardinals such that $\kappa_0=\aleph_0$, and for each successor ordinal $\alpha, \kappa_\alpha$ is a $\Pi_{1}^{1}$ - reflecting cardinal and no $\kappa_\alpha,$ for limit ordinal $\alpha,$ is inaccessible (otherwise cut the universe).

Let  $\mathbb{P}=\langle\langle\mathbb{P}_{\alpha}~|~\alpha\leq Ord\rangle, \langle\dot{\mathbb{Q}}_{\alpha}~|~\alpha\in Ord\rangle\rangle$ be the reverse Easton iteration such that
\begin{enumerate}
\item[(1)] $\mathbb{P}_0$ is the trivial forcing,

\item[(2)] For $\alpha=0,$ or $\alpha$ a successor ordinal, $ \Vdash_{\alpha}$ " $\dot{\mathbb{Q}}_\alpha=\dot{C}ol(\kappa_\alpha, < \kappa_{\alpha+1})$ '',

\item[(3)] For limit ordinal $\alpha, \Vdash_{\alpha}$ " $\dot{\mathbb{Q}}_\alpha=\dot{C}ol(\kappa^+_\alpha, < \kappa_{\alpha+1})$ ".
\end{enumerate}
Our defined forcing notion has the following properties:

\begin{lemma}\label{properties of forcing}
Let $G$ be $\mathbb{P}$-generic over $V$. Then
\begin{enumerate}
\item[(1)] $CARD^{V[G]}= \{\kappa_\alpha: \alpha \in Ord  \} \cup \{ \kappa^+_{\alpha}: \alpha \in Ord, \alpha $ is a limit ordinal $\}$,

\item[(2)] If $\lambda$ is successor of a regular cardinal in $V[G],$ then  $\lambda=\kappa_{\alpha+1},$ for some $\alpha$,

\item[(3)] If $\alpha=0$ or $\alpha$ is a successor ordinal, then $\mathbb{P}\simeq \mathbb{P}_\alpha * \dot{\mathbb{P}}_{[\alpha, \infty)},$ where
$\Vdash_\alpha$ " $\dot{\mathbb{P}}_{[\alpha, \infty)}$ is $\kappa_\alpha$ - closed and weakly homogeneous ''.

\item[(4)] If $\alpha$ is a limit ordinal, then $\mathbb{P}\simeq \mathbb{P}_\alpha * \dot{\mathbb{P}}_{[\alpha, \infty)},$ where
$\Vdash_\alpha$ " $\dot{\mathbb{P}}_{[\alpha, \infty)}$ is $\kappa^+_{\alpha}$-closed and weakly homogeneous ''.

\item[(5)] $GCH$ holds in $V[G]$.
\end{enumerate}
\end{lemma}
\begin{proof}
The proof is standard. The homogeneity part follows from the work of Friedman-Dobrinen \cite{friedman}.
\end{proof}

Now note that in $V[G]$ the definable tree property holds for successor of every regular cardinal. To see this let $\lambda$ be the successor of a regular cardinal in $V[G]$. By part (2) of lemma \ref{properties of forcing}, there is an ordinal $\alpha$ such that $\lambda=\kappa_{\alpha+1}$. Then we have the following cases:

\textit{Case 1:} $\alpha=0$ or $\alpha$ is a successor ordinal.

\noindent As $\kappa_{\alpha+1}$ is a $\Pi_{1}^{1}$ - reflecting cardinal in $V$ and all steps of our forcing up to $\mathbb{P}_{\alpha}$ are small with respect to cardinal $\kappa_{\alpha+1}$, it follows from lemma \ref{preservation of reflecting cardinals under small forcings} that $\kappa_{\alpha+1}$ remains $\Pi_{1}^{1}$ - reflecting in $V^{\mathbb{P}_{\alpha}}$. By definition of our iteration, we force with $\dot{C}ol(\kappa_\alpha, < \kappa_{\alpha+1})$ in $V^{\mathbb{P}_{\alpha}}$. By lemma \ref{generalization of leshem's result}, $\lambda$ will have definable tree property in $V^{\mathbb{P}_{\alpha+1}}$. Also if we split our iteration at $\alpha$ as $\mathbb{P}\simeq \mathbb{P}_\alpha * \dot{\mathbb{P}}_{[\alpha, \infty)}$, then by part (3) of lemma \ref{properties of forcing} the tail forcing at step $\alpha$ is $\kappa_{\alpha}$ - closed and weakly homogeneous. If $\alpha$ is a successor ordinal like $\beta+1$ then by lemma \ref{preservation of definable tree property under closed homogeneous forcings} it follows that the already forced definable tree property on other successors of regular cardinals less than $\lambda$ which are in the form $\theta=\kappa_{\gamma+1}$ for some $\gamma<\beta$, won't be destroyed by tail forcing because it is weakly homogeneous and has enough closure. Also in the case $\alpha = 0$ there is no successor of a regular cardinal below $\lambda$ and so we have nothing to prove.

\textit{Case 2:} $\alpha$ is a limit ordinal.

\noindent Note that by continuity of the sequence $\langle \kappa_\alpha: \alpha \in Ord \rangle$, we have $\kappa_{\alpha}=sup\{\kappa_{\beta}~|~\beta<\alpha\}$. By smallness of forcing up to stage $\alpha$ with respect to $\kappa_{\alpha+1}$, $\kappa_{\alpha+1}$ remains $\Pi_{1}^{1}$ - reflecting in $V^{\mathbb{P}_{\alpha}}$. Thus the inequality $\kappa_{\alpha}<\kappa_{\alpha}^{+}<\kappa_{\alpha+1}$ holds in $V^{\mathbb{P}_{\alpha}}$. By definition of our iteration we force with $\dot{C}ol(\kappa^+_\alpha, < \kappa_{\alpha+1})$ in this stage which by lemma \ref{generalization of leshem's result} makes the definable tree property on $\kappa_{\alpha+1}$ true in $V^{\mathbb{P}_{\alpha+1}}$. By part (4) of lemma \ref{properties of forcing} if we split our iteration as $\mathbb{P}\simeq \mathbb{P}_\alpha * \dot{\mathbb{P}}_{[\alpha, \infty)}$, the tail forcing is weakly homogeneous and $\kappa_{\alpha}^{+}$ - closed which by lemma \ref{preservation of definable tree property under closed homogeneous forcings} is sufficient to preserve the already forced definable tree property on all successors of regular cardinals less than $\lambda=\kappa_{\alpha+1}$.
\end{proof}

\section{Definable tree property at successor of a singular cardinal}
In this section we give the proof of the main theorem \ref{definable tree property at a singular}.
\subsection{Supercompact extender based Prikry forcing}
In this subsection, we present Merimovich's supercompact extender based Prikry forcing which appeared in \cite{mer4}. We present it in some details as we need it for later use.
For each $\a<j(\gl)$ let
$\lambda_\a$ be minimal $\eta<\gl$ such that $\a < j(\eta),$
and let $E(\a) \subseteq P(\lambda)$ be defined by
\begin{center}
$A\in E(\a) \Leftrightarrow \a \in j(A).$
\end{center}
Note that each $E(\a)$ is a $\k$-complete ultrafilter on $\lambda$ and it has concentrated on $\lambda_\a$. Also let
\begin{center}
$i_\a: \text{V} \rightarrow \text{N}_\a \simeq \text{Ult(V}, E(\a)).$
\end{center}
Finally put
\begin{center}
$E=  \langle \langle E(\a): \a<j(\lambda)    \rangle, \langle  \pi_{\beta, \a}: \beta, \a< j(\gl), \a\in range(i_\beta)          \rangle \rangle$
\end{center}
to be the extender derived from $j$, where $\pi_{\beta, \a}: \lambda \rightarrow \lambda$ is such that $j(\pi_{\beta, \a})(\beta)=\a$ (such a $\pi_{\beta, \a}$ exists as $\a \in range(i_\beta)$). Let $i: \text{V} \rightarrow \text{N }\simeq \text{Ult(V, E)}$ be the resulting extender embedding. We may assume that $j=i.$
\begin{definition}
Let $d\in [j(\lambda)]^{<\lambda}$ be such that $\k, |d| \in d.$ Then $\nu \in \text{OB(d)}$ if the following conditions hold:
\begin{enumerate}
\item $\nu: \dom(\nu) \rightarrow \lambda,$ where $\dom(\nu) \subseteq d,$
\item $\k, |d| \in \dom(\nu),$
\item $|\nu| \leq \nu(|d|),$
\item $\forall \a<\l$ $(j(\a)\in \dom(\nu) \Rightarrow \nu(j(\a))=\a ),$
\item $\a\in \dom(\nu) \Rightarrow \nu(\a) < \gl_\a,$
\item $\a < \beta$ in $\dom(\nu) \Rightarrow \nu(\a) < \nu(\beta).$
\end{enumerate}
Also for $\nu_0, \nu_1 \in \text{OB(d)},$ set $\nu_0 < \nu_1$ if and only if
\begin{enumerate}
\item [(6)]$\dom(\nu_0) \subseteq \dom(\nu_1),$

\item [(7)] For all $\a\in \dom(\nu_0)\setminus j[\gl], \nu_0(\a) < \nu_1(\a).$
\end{enumerate}
\end{definition}
We now define the forcing notion $\PP^*(E, \kappa, \lambda)$ as follows:
\begin{definition}
$\PP^*(E, \kappa, \lambda)$ consists of all functions $f: d \rightarrow \lambda^{<\omega}$, where  $d\in [j(\lambda)]^{<\lambda}$, $\k, |d| \in d,$ and  such that

$(1)$ For any $j(\a) \in d, f(j(\a))=\langle \a \rangle,$

$(2)$ For any $\a\in d\setminus j[\gl], $ there is some $k<\omega$ such that
\begin{center}
$f(\a)= \langle  f_0(\a), \dots, f_{k-1}(\a)               \rangle \subseteq \gl_\a $
\end{center}
$\hspace{0.7cm}$ is a finite increasing subsequence of $\gl_\a.$ For $f, g\in \PP^*(E, \kappa, \lambda),$
\begin{center}
$f \leq^*_{\PP^*(E, \kappa, \lambda)} g \Leftrightarrow f \supseteq g.$
\end{center}
\end{definition}
\begin{remark}
$\langle \PP^*(E, \kappa, \lambda), \leq^*_{\PP^*(E, \kappa, \lambda)} \rangle \approx Add(\lambda, |j(\lambda)|).$
\end{remark}
\begin{definition}
Assume $d\in [j(\lambda)]^{<\lambda}$ and $\k, |d| \in d.$
 Let $T \subseteq OB(d)^{<\xi} (1<\xi \leq \omega)$ and  $n<\omega.$ Then

 $(1)$ $Lev_n(T)=T \cap \text{OB(d)}^{n+1},$

 $(2)$ $\Suc_T(\langle \rangle) = \Lev_0(T),$

 $(3)$ $\Suc_T(\langle \nu_o, \dots, \nu_{n-1}         \rangle)=\{\mu\in OB(d): \langle \nu_o, \dots, \nu_{n-1}, \mu \rangle \in T    \}.$
\end{definition}
\begin{definition}
Assume $d\in [j(\lambda)]^{<\lambda}$ and $\k, |d| \in d.$
Let $T \subseteq OB(d)^{<\xi} (1<\xi \leq \omega)$. For $\langle \nu \rangle \in T,$ let
\begin{center}
$T_{\langle \nu \rangle}=\{  \langle \nu_o, \dots, \nu_{k-1} \rangle: k<\omega, \langle \nu, \nu_o, \dots, \nu_{k-1} \rangle \in T \}$
\end{center}
 and define by recursion for $\langle \nu_o, \dots, \nu_{n-1} \rangle \in T,$
\begin{center}
$T_{\langle \nu_o, \dots, \nu_{n-1} \rangle}= (T_{\langle \nu_o, \dots, \nu_{n-2} \rangle})_{\langle \nu_{n-1} \rangle}.$
\end{center}
\end{definition}
\begin{definition}
Assume $d\in [j(\lambda)]^{<\lambda}$ and $\k, |d| \in d.$
We define the measure $E(d)$ on $\text{OB(d)}$ by
\begin{center}
 $E(d)=\{ X \subseteq \text{OB(d)}: \text{mc(d)}\in j(X) \},$
\end{center}
where $\text{mc(d)}=\{\langle j(\a), \a \rangle : \a\in d   \}.$
\end{definition}
\begin{definition}
Assume $d\in [j(\lambda)]^{<\lambda}$ and $\k, |d| \in  d.$ Let $T \subseteq \text{OB(d)}^{<\omega}$ be a tree.
 $T$ is called an $E(d)$-tree, if
\begin{enumerate}
\item $\forall \langle \nu_0, \dots, \nu_{n-1} \rangle \in T$ $(\nu_0 < \dots < \nu_{n-1}),$
\item $\forall \langle \nu_0, \dots, \nu_{n-1} \rangle \in T$ $(\Suc_T(\langle \nu_0, \dots, \nu_{n-1} \rangle)\in E(d)).$
\end{enumerate}
\end{definition}
\begin{definition}
Assume $c\in [j(\lambda)]^{<\lambda}$ and  $A \subseteq \text{OB(d)}^{<\omega}.$ Then
\begin{center}
 $A \upharpoonright c =\{\langle \nu_0 \upharpoonright c, \dots, \nu_{n-1} \upharpoonright c \rangle: n< \omega, \langle \nu_0, \dots, \nu_{n-1} \rangle \in A \}.$
 \end{center}
\end{definition}
\begin{remark}
For $f\in \PP^*(E, \kappa, \lambda),$ we use $\text{OB(f)}, E(f)$ and $\text{mc(f})$ to denote $\text{OB}(\dom(f)), E(\dom(f))$ and $\text{mc}(\dom(f))$ respectively.
\end{remark}
We are now ready to define our main forcing notion, $\PP(E, \kappa, \lambda).$
\begin{definition}
$p\in \PP(E, \kappa, \lambda)$ iff $p=  \langle f^p, A^p \rangle$ where

$(1)$ $f^p \in \PP^*(E, \kappa, \lambda),$

$(2)$ $A^p$ is an $E(f^p)$-tree.
\end{definition}
\begin{definition}
Let $p, q\in \PP(E, \kappa, \lambda).$ Then $p \leq^* q$ ($p$ is a Prikry extension of $q$) iff:

$(1)$ $f^p \leq^*_{\PP^*(E, \kappa, \lambda)} f^q,$

$(2)$ $A^p \upharpoonright \dom(f^q) \subseteq A^q.$
\end{definition}
\begin{definition}
Let $f\in \PP^*(E, \kappa, \lambda), \nu \in OB(f)$ and suppose $\nu(\k) > \max(f(\k)).$ Then $f_{ \langle \nu \rangle}\in \PP^*(E, \kappa, \lambda)$ has the same domain as $f$ and
\begin{center}
 $f_{ \langle \nu \rangle}(\a) = \left\{ \begin{array}{l}
       f(\a)^{\frown} \langle \nu(\a) \rangle  \hspace{1.1cm} \text{ if } \a\in \dom(\nu), \nu(\a) > \max(f(\a)),\\
       f(\a)  \hspace{2.5cm} \text{Otherwise}.
 \end{array} \right.$
\end{center}
Given $\langle \nu_0, \dots, \nu_{n-1} \rangle \in OB(f)^n$ such that $\nu_0(\k) > \max(f(\k))$ and $v_0 < \dots < \nu_{n-1},$ define $f_{\langle \nu_0, \dots, \nu_{n-1} \rangle}$ by recursion as
\begin{center}
$f_{\langle \nu_0, \dots, \nu_{n-1} \rangle}=(f_{\langle \nu_0, \dots, \nu_{n-2} \rangle})_{\langle \nu_{n-1} \rangle}.$
\end{center}
Let $p\in \PP(E, \kappa, \lambda),$ and suppose $\langle \nu_0, \dots, \nu_{n-1} \rangle \in A^p$ is such that $\nu_0(\k) > \max(f^p(\k))$ and $v_0 < \dots < \nu_{n-1}.$ Then
\begin{center}
$p_{\langle \nu_0, \dots, \nu_{n-1} \rangle}=\langle f^p_{\langle \nu_0, \dots, \nu_{n-1} \rangle}, A^p_{\langle \nu_0, \dots, \nu_{n-1} \rangle} \rangle.$
\end{center}
\end{definition}
\begin{remark}
Whenever the notation $\langle \nu_0, \dots, \nu_{n-1} \rangle$ is used, where $\nu_0, \dots, \nu_{n-1} \in OB(f),$ it is implicitly assumed $\nu_0(\k) > \max(f(\k))$ and $v_0 < \dots < \nu_{n-1}.$
\end{remark}
\begin{definition}
Let $p, q\in \PP(E, \kappa, \lambda).$ Then
\begin{center}
$p \leq q \Leftrightarrow \exists \langle \nu_0, \dots, \nu_{n-1} \rangle \in A^q$ $(p \leq^* q_{\langle \nu_0, \dots, \nu_{n-1} \rangle}).$
\end{center}
\end{definition}
Let us state the main properties of the forcing notion $\PP(E, \kappa, \lambda).$ The proof can be found in \cite{mer4}.
\begin{theorem}\label{properties of prikry forcing}
Let $\text{G}$ be $\PP(E, \kappa, \lambda)$-generic over $\text{V}$. Then
\begin{enumerate}
\item $\langle \PP(E, \kappa, \lambda), \leq \rangle$ satisfies the $\l^+-c.c.,$

\item   $\langle \PP(E, \kappa, \lambda), \leq, \leq^* \rangle$ satisfies the Prikry property,

\item  $\langle \PP(E, \kappa, \lambda), \leq^* \rangle$ is $\k$-closed,

\item $cf^{\text{V[G]}}(\k)=\omega,$

\item All $\text{V}$-cardinals in the interval $(\k, \lambda)$ are collapsed,

\item $\lambda$ is preserved in $\text{V[G]},$

\item In $V[G], 2^\kappa=|j(\lambda)|.$
\end{enumerate}
\end{theorem}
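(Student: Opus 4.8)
I would follow the analysis of \cite{mer4}, proving the structural facts (1)--(3) by hand and then reading (4)--(7) off the generic object; throughout, $j\colon V\to M$ is the supercompactness embedding from which $E$ is derived, with $\crit(j)=\kappa$, $j(\kappa)>\lambda$ and enough closure, and $\GCH$ holds in $V$. Property~(3) is the softest: given a $\leq^{*}$-decreasing $\langle p_{\xi}=\langle f^{p_{\xi}},A^{p_{\xi}}\rangle:\xi<\delta\rangle$ with $\delta<\kappa$, put $f=\bigcup_{\xi<\delta}f^{p_{\xi}}$; since $\leq^{*}$ only enlarges stems (never altering existing values), each $f(\alpha)$ is still a finite sequence of the required form, and since $\lambda$ is regular and $\delta<\kappa<\lambda$ its domain still lies in $[j(\lambda)]^{<\lambda}$ (adjoining $|\dom f|$ if necessary), so $f\in\PP^{*}(E,\kappa,\lambda)$; letting $\Suc_{A}(\vec\nu)$ be the intersection over $\xi$ of the pull-backs of $\Suc_{A^{p_{\xi}}}$ to $\dom f$ keeps $A$ an $E(f)$-tree because every $E(d)$ is a $\kappa$-complete ultrafilter (as $\crit(j)=\kappa$), and $\langle f,A\rangle$ is then a $\leq^{*}$-lower bound. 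For~(1), let $\{p_{i}:i<\lambda^{+}\}$ be given; by $\GCH$, $\lambda^{<\lambda}=\lambda$, so the $\Delta$-system lemma yields $\lambda^{+}$ indices whose stem-domains form a $\Delta$-system with root $r$, and thinning once more (only $\lambda$ possible values of $f\restriction r$) we may assume these stems agree on $r$. For any two of them, $f^{p_{i}}$ and $f^{p_{j}}$ agree on $\dom f^{p_{i}}\cap\dom f^{p_{j}}=r$, so $f^{p_{i}}\cup f^{p_{j}}$ is a legitimate common stem; pulling $A^{p_{i}}$ and $A^{p_{j}}$ back along the canonical projections of $E(\dom f^{p_{i}}\cup\dom f^{p_{j}})$ onto $E(\dom f^{p_{i}})$ and $E(\dom f^{p_{j}})$ and intersecting produces a common $E$-tree, so $p_{i}\compatible p_{j}$ --- contradicting antichain-ness. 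Hence the $\lambda^{+}$-c.c.

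Property~(2), the Prikry property, is the technical heart of the theorem, and the step I expect to be the main obstacle. Given $p=\langle f^{p},A^{p}\rangle$ and a sentence $\sigma$ of the forcing language, I would construct $q\leq^{*}p$ deciding $\sigma$ by the usual homogenize-then-use-genericity scheme. At a node $\vec\nu$ of the current tree and for $\mu\in\Suc(\vec\nu)$, ask whether the one-step extension by $\mu$ over $\vec\nu$ has a $\leq^{*}$-extension forcing $\sigma$, has one forcing $\neg\sigma$, or has neither; by the ultrafilter property of $E(\dom f^{p})$ the $\mu$'s realizing a fixed one of these alternatives form an $E$-measure-one set, and passing to it makes the behaviour at $\vec\nu$ constant. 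Carrying this out at every node and every finite level, and taking (by~(3), as $\omega<\kappa$) a $\leq^{*}$-lower bound of the resulting $\omega$-chain, yields $q\leq^{*}p$ whose $\sigma$-behaviour is level-by-level homogeneous and stable under further $\leq^{*}$-extension. Then either some level of $A^{q}$ forces $\sigma$ (whence $q$ itself forces $\sigma$, as the generic Prikry sequence always passes through that level), or some level forces $\neg\sigma$, or no $\leq^{*}$-extension of any $q_{\vec\nu}$ ever decides $\sigma$ --- and the last case is absurd, since every extension of $q$ is a $\leq^{*}$-extension of some $q_{\vec\nu}$, while in a generic $G\ni q$ the truth value of $\sigma$ is witnessed by some $r\in G$ below $q$. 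The delicate bookkeeping --- homogenizing simultaneously over the $<\lambda$ coordinates of $\dom f^{p}$ while keeping every successor-set $E$-large, and propagating non-decision downward through $\leq^{*}$ --- is precisely what is carried out in \cite{mer4}, and is where essentially all the difficulty lies.

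The remaining clauses follow from (1)--(3) together with a direct analysis of $G$. For~(4): every condition has $\kappa\in\dom f$, and $\lambda_{\kappa}=\kappa$ (since $\crit(j)=\kappa$ makes $\kappa<j(\eta)$ fail for $\eta<\kappa$), so the $\kappa$-th coordinate of the generic stem is an increasing sequence of ordinals below $\kappa$ which, by obvious density arguments, has length $\omega$ and is cofinal in $\kappa$; thus $\cf^{V[G]}(\kappa)=\omega$. For~(5) and the lower bound in~(7): analysing the generic stems on the coordinates $\alpha<j(\lambda)$ via the projections $\pi_{\beta,\alpha}$ shows that in $V[G]$ each $V$-cardinal in $(\kappa,\lambda)$ becomes of size $\le\kappa$, and that there are $|j(\lambda)|$ pairwise distinct subsets of $\kappa$ (distinct coordinates giving distinct subsets by genericity, consistently with the Remark that $\langle\PP^{*}(E,\kappa,\lambda),\leq^{*}\rangle\approx\Add(\lambda,|j(\lambda)|)$) --- this collapsing-and-blowing-up is the mechanism characteristic of extender-based, as opposed to plain, Prikry forcing, and along with~(2) carries the bulk of \cite{mer4}. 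The upper bound $2^{\kappa}\le|j(\lambda)|$ in~(7) is then a nice-name count: by the $\lambda^{+}$-c.c.\ every subset of $\kappa$ has a name assembled from $\kappa$-many antichains of size $\le\lambda$, and $\GCH$ together with the regularity of $|j(\lambda)|$ (which is built into the choice of $j$) gives $|j(\lambda)|^{\kappa}=|j(\lambda)|$, bounding the number of such names. Finally~(6): by~(1) no cardinal $\ge\lambda^{+}$, nor $\lambda^{+}$ itself, is collapsed, and by~(5) every $V$-cardinal strictly between $\kappa$ and $\lambda$ is already collapsed to $\le\kappa$, so $\lambda$ can fail to remain a cardinal only via a surjection of $\kappa$ onto it; excluding this uses a strengthening of the Prikry property obtained from the finer structure of the $E$-trees (again \cite{mer4}), which together with the regularity of $\lambda$ in $V$ confines any such putative map below $\lambda$. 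Thus the genuinely hard points are~(2) and the extender-coordinate analysis underlying (5)--(7).
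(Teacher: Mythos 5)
The paper itself gives no argument for this theorem beyond the citation to Merimovich \cite{mer4}, and your proposal follows essentially that same route: the clauses you verify by hand ((1) via the $\Delta$-system argument using $\lambda^{<\lambda}=\lambda$, (3) via $\kappa$-completeness of the measures $E(d)$ and $\lambda$-regularity for the union of domains, (4) via the coordinate $\kappa$ with $\lambda_\kappa=\kappa$) are argued correctly, while the genuinely hard points --- the Prikry lemma for the extender-based forcing, the preservation of $\lambda$, and the exact computation $2^\kappa=|j(\lambda)|$ --- you defer, exactly as the paper does, to \cite{mer4}. One loose spot, safely inside the deferred part: the lower bound $2^\kappa\ge|j(\lambda)|$ does not come from ``distinct subsets of $\kappa$'' at each coordinate, since the measure $E(\alpha)$ concentrates on $\lambda_\alpha$ (which may exceed $\kappa$); the coordinates give eventually distinct $\omega$-sequences below $\lambda$, which yield the bound only after the cardinals in $(\kappa,\lambda)$ have been collapsed so that $\lambda=\kappa^+$ in $V[G]$.
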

It follows that $\text{V}$ and $\text{V[G]}$ have the same bounded subsets of $\k$ and $(\k^+)^{\text{V[G]}}=\lambda.$

\subsection{Projection of forcing notions}
Recall that we assumed $\lambda> \kappa$ is a measurable cardinal. Let $i: V \rightarrow N$ witnesses this; so $crit(i)=\lambda$ and
$^{\lambda}N \subseteq N.$ Consider the forcing notions $\PP(E, \kappa, \lambda)$
and $i(\PP(E, \kappa, \lambda)).$ Also note that
by closure of $N$ under $\lambda$-sequences, we have
\[
\PP(E, \kappa, \lambda)=\PP(E, \kappa, \lambda)_N,
\]
also it is clear that
\[
i(\PP(E, \kappa, \lambda)) = \PP(i(E), \kappa, i(\lambda))_N.
\]
Now by working in $N$, define $\pi: i(\PP(E, \kappa, \lambda)) \rightarrow \PP(E, \kappa, \lambda)$ as follows: let $p=\langle f^p, A^p \rangle \in i(\PP(E, \kappa, \lambda)).$ Set
\[
\pi(p)= \langle   f^p \upharpoonright (\dom(f^p) \cap j(\lambda)), A^p \upharpoonright    (\dom(f^p) \cap j(\lambda))             \rangle.
\]
The next lemma can be proved easily.
\begin{lemma}\label{projection lemma}
(In N) $\pi$ is a projection of forcing notions, in fact

$(1)$ $\pi(1_{i(\PP(E, \kappa, \lambda)) })=1_{\PP(E, \kappa, \lambda)},$

$(2)$ $\pi$ is order preserving with respect to both $\leq$ and $\leq^*$ relations,

$(3)$ If $p \in \PP(E, \kappa, \lambda), q \in i(\PP(E, \kappa, \lambda))$ and $p \leq \pi(q),$ then there exists $q^* \leq q$
such that $\pi(q^*) \leq^* p.$
\end{lemma}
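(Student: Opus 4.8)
The three items are a routine unwinding of definitions, so the plan is to verify each in turn working inside $N$. For $(1)$, recall that $1_{i(\PP(E, \kappa, \lambda))}$ is the condition whose first coordinate is the function with domain $\{\kappa, |d_0|\}$ (for the minimal admissible $d_0$ in $i(V)$, which by elementarity is just $\{j(\kappa), \kappa, \dots\}$ appropriately) sending $j(\alpha)\mapsto\langle\alpha\rangle$ and having the trivial, everywhere-$E$-large tree as second coordinate. Restricting its domain to $j(\lambda)$ lands inside $\PP(E,\kappa,\lambda)$ and one checks directly that the restricted function together with the restricted tree is precisely $1_{\PP(E,\kappa,\lambda)}$; the only point to confirm is that $\{\kappa,|d_0|\}\cap j(\lambda)$ still contains $\kappa$ and $|d|$ for the restricted $d$, which holds because $\kappa<\lambda\le j(\lambda)$ and $|d\cap j(\lambda)|\le|d|<\lambda\le j(\lambda)$.

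For $(2)$, suppose $p\le^* q$ in $i(\PP(E,\kappa,\lambda))$, so $f^p\supseteq f^q$ and $A^p\restricted\dom(f^q)\subseteq A^q$. Intersecting domains with $j(\lambda)$ commutes with these operations: $f^p\restricted(\dom(f^p)\cap j(\lambda))\supseteq f^q\restricted(\dom(f^q)\cap j(\lambda))$ is immediate, and $A^p\restricted(\dom(f^p)\cap j(\lambda))\restricted\dom(f^q)\subseteq A^q\restricted(\dom(f^q)\cap j(\lambda))$ follows because restriction of trees to a smaller coordinate set is monotone and the two restrictions compose. The $\le$ case reduces to the $\le^*$ case plus the observation that $\pi$ commutes with the one-step extension operation $q\mapsto q_{\langle\nu_0,\dots,\nu_{n-1}\rangle}$: indeed $\pi(q_{\langle\nu_0,\dots,\nu_{n-1}\rangle}) = \pi(q)_{\langle\nu_0\restricted j(\lambda),\dots,\nu_{n-1}\restricted j(\lambda)\rangle}$, since the definitions of $f_{\langle\nu\rangle}$ and $A_{\langle\nu\rangle}$ only ever look at coordinates in $\dom(f)$, and restricting the $\nu_i$ to $j(\lambda)$ before or after performing the extension yields the same object. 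Here I use that $A^q$ being an $i(E)$-tree projects to an $E(\dom(f^q)\cap j(\lambda))$-tree, which is exactly the compatibility of the measures $i(E)(d)$ and $E(d\cap j(\lambda))$ under the restriction map $\nu\mapsto\nu\restricted j(\lambda)$; this in turn is the statement that $\mathrm{mc}(d)\restricted j(\lambda)=\mathrm{mc}(d\cap j(\lambda))$, which is immediate from the definition $\mathrm{mc}(d)=\{\langle j(\alpha),\alpha\rangle:\alpha\in d\}$.

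The substantive item is $(3)$. Given $p\in\PP(E,\kappa,\lambda)$ and $q\in i(\PP(E,\kappa,\lambda))$ with $p\le\pi(q)$, I want $q^*\le q$ with $\pi(q^*)\le^* p$. First strengthen $q$ (below $q$ in $\le$) so that its projection becomes a $\le^*$-extension of $\pi(q)$ rather than merely a $\le$-extension: since $p\le\pi(q)$ there is a node $t=\langle\nu_0,\dots,\nu_{n-1}\rangle\in A^{\pi(q)}$ with $p\le^*\pi(q)_t$; lift $t$ to a node $\tilde t\in A^q$ (using that $A^q$ projects onto $A^{\pi(q)}$, so some extension of the entries of $t$ to $\dom(f^q)$ forms a node of $A^q$ — this uses the $\le^+$-style density/fusion already available from Merimovich's analysis, or more simply that $\Suc_{A^q}$ projects onto $\Suc_{A^{\pi(q)}}$ coordinatewise by the measure compatibility above), and replace $q$ by $q_{\tilde t}$. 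Now $\pi(q_{\tilde t})=\pi(q)_t\ge^* p$, so WLOG $p\le^*\pi(q)$. Then define $q^*$ by taking $f^{q^*}$ to be the amalgamation of $f^p$ on $\dom(f^p)\subseteq j(\lambda)$ with $f^q$ on $\dom(f^q)\setminus j(\lambda)$ — these agree on $\dom(f^q)\cap j(\lambda)$ precisely because $f^p\supseteq f^{\pi(q)}=f^q\restricted j(\lambda)$ — and $A^{q^*}$ the pullback of $A^p$ along the restriction map intersected with $A^q$, i.e. $A^{q^*}=\{s\in A^q: s\restricted j(\lambda)\in A^p\}$. One checks $A^{q^*}$ is an $i(E)(f^{q^*})$-tree using once more that the restriction map pushes $i(E)(d)$ onto $E(d\cap j(\lambda))$, so $E$-large sets pull back to $i(E)$-large sets; that $q^*\le^* q$ is immediate from the construction; and $\pi(q^*)=\langle f^p, A^p\rangle=p\le^* p$. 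The main obstacle, and the only place real care is needed, is this measure-compatibility fact — that $\pi$ sends the normal measure $i(E)(d)$ to $E(\dom(f)\cap j(\lambda))$ — together with the bookkeeping ensuring the lifted node $\tilde t$ exists; everything else is definitional. I expect to dispatch $\tilde t$'s existence by noting $\dom(\mathrm{mc}(d))\restricted j(\lambda)=\mathrm{mc}(d\cap j(\lambda))$ and applying Łoś in the relevant ultrapowers.
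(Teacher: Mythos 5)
Your proof is correct and follows essentially the same route as the paper: parts (1) and (2) are definitional, and for part (3) you build $q^*$ by amalgamating $f^p$ with $f^q$ on $\dom(f^p)\cup\dom(f^q)$ and taking a measure-one tree whose restrictions to $\dom(f^p)$ and $\dom(f^q)$ land in $A^p$ and $A^q$ respectively. You are in fact somewhat more careful than the paper, which leaves the reduction from $p\le\pi(q)$ to $p\le^*\pi(q)$ and the largeness of the amalgamated tree as ``clear''; your only slips are cosmetic (the displayed formula for $A^{q^*}$ should range over $\mathrm{OB}$ of the union domain rather than over $A^q$, and $\pi(q^*)\le^* p$ rather than $\pi(q^*)=p$ is what actually holds and is all that is needed).
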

\begin{proof}
Parts $(1)$ and $(2)$ can be proved easily, so we prove the part $(3)$. Thus let $p \in \PP(E, \kappa, \lambda), q \in i(\PP(E, \kappa, \lambda))$ and
suppose that $p \leq \pi(q).$ Let $q^*=\langle f^*, A^*        \rangle \in i(\PP(E, \kappa, \lambda))$ be such that:
\begin{enumerate}
\item $\dom(f^*)=\dom(f^p) \cup \dom(f^q),$
\item For $\alpha \in \dom(f^p), f^*(\alpha)=f^p(\alpha),$
\item For $\alpha \in \dom(f^q)\setminus \dom(f^p), f^*(\alpha)=f^q(\alpha),$
\item $A^*$ is an $E(f^*)$-tree,
\item $A^* \upharpoonright \dom(f^p) \subseteq A^p,$
\item $A^* \upharpoonright \dom(f^q) \subseteq A^q.$
\end{enumerate}
Then it is clear that $q^* \leq q$
and that $\pi(q^*) \leq^* p.$
The lemma follows.
\end{proof}

\subsection{Homogeneity of the quotient forcing}
Assume $H$ is $i(\PP(E, \kappa, \lambda))$-generic over $V$ and let $G$ be the filter generated by $\pi[H].$ By Lemma \ref{projection lemma} $G$ is $\PP(E, \kappa, \lambda)$-generic over $V$, and in $V[G]$, we can consider the quotient forcing:
\[
i(\PP(E, \kappa, \lambda))/ G = \{p \in i(\PP(E, \kappa, \lambda)): \pi(p) \in G     \}.
\]
In the next lemma we show that the above forcing has enough homogeneity properties. We will use this to show that some objects which are in $V[H]$
were already in $V[G].$
For a forcing notion $\PP$ and a condition $p\in \PP,$ set $\PP \downarrow p =\{q\in \PP: q \leq p   \}$ consists of all extensions of $p$ in $\PP.$ The homogeneity of our quotient forcing follows from the next theorem.
\begin{lemma}\label{homogeneity lemma} (Homogeneity lemma) 
Suppose $p, q \in i(\PP(E, \kappa, \lambda))$ so that $\pi(p)=\pi(q).$ Then there are $p^* \leq p, ~ q^* \leq q$ and an isomorphism
\[
\Phi: i(\PP(E, \kappa, \lambda))\downarrow p^* \cong i(\PP(E, \kappa, \lambda)) \downarrow q^*.
\]
\end{lemma}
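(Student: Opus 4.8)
The idea is to build $p^*$ and $q^*$ by first extending $p$ and $q$ so that their working parts have a common domain and, on the part of that domain lying outside $j(\lambda)$, are related by a coordinate permutation of $i(\PP(E,\kappa,\lambda))$ fixing $j(\lambda)$ pointwise. Since $\pi(p)=\pi(q)$, we already have $f^p\restricted(\dom(f^p)\cap j(\lambda)) = f^q\restricted(\dom(f^q)\cap j(\lambda))$ and the same for the trees; so the two conditions agree completely on coordinates below $j(\lambda)$, and differ only in which ordinals in the interval $[j(\lambda), i(\lambda))$ they mention and what finite sequences they assign there. Using $\lambda$-closure of $N$ and of $\langle i(\PP(E,\kappa,\lambda)),\leq^*\rangle$ (part (3) of Theorem \ref{properties of prikry forcing} applied inside $N$ to $i(\PP(E,\kappa,\lambda))$, which is $\lambda$-closed there since $\crit(i)=\lambda$), first refine: let $d = \dom(f^p)\cup\dom(f^q)$, and $\leq^*$-extend both $p$ and $q$ to conditions $p'$, $q'$ with common working-part domain $d$, agreeing on $d\cap j(\lambda)$, with an $E(d)$-tree on top obtained by pulling back $A^p$ and $A^q$; this uses that $E(d)$-trees with a fixed domain can be intersected and that the restriction maps are as in the definition of $\leq^*$.

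Next, the key combinatorial step: the two domains $\dom(f^{p'})$ and $\dom(f^{q'})$ now coincide, call it $d$, but $f^{p'}$ and $f^{q'}$ may still assign different finite increasing sequences to coordinates $\alpha\in d\setminus j[\lambda]$ with $\alpha\ge j(\lambda)$. I would further $\leq^*$-extend once more so that on every such coordinate the two working parts have the same length and in fact can be matched: here one exploits the homogeneity of $\Add(\lambda,|i(\lambda)|)$-style forcing — that is, the remark that $\langle\PP^*(i(E),\kappa,i(\lambda)),\leq^*\rangle \approx \Add(\lambda,|i(\lambda)|)$ — to find a permutation $\sigma$ of the index set $i(\lambda)$ fixing $j(\lambda)$ pointwise such that the induced automorphism of $i(\PP(E,\kappa,\lambda))$ carries a $\leq^*$-extension $p^*$ of $p'$ to a $\leq^*$-extension $q^*$ of $q'$. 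The automorphism $\Phi$ is then literally "apply $\sigma$ to all coordinates": $\Phi(\langle f,A\rangle) = \langle f\circ\sigma^{-1}, \sigma[A]\rangle$, suitably interpreted on trees, and one checks it preserves both $\leq$ and $\leq^*$ and sends $i(\PP(E,\kappa,\lambda))\downarrow p^*$ onto $i(\PP(E,\kappa,\lambda))\downarrow q^*$ because $\sigma$ moves the "support above $j(\lambda)$" of $p^*$ exactly onto that of $q^*$ while fixing everything below.

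The main obstacle I expect is bookkeeping the tree parts $A^p,A^q$ correctly under all of this: the measures $E(d)$ depend on $d$ via $\mathrm{mc}(d)$, the restriction operation $A\restricted c$ interacts with enlarging domains, and one must make sure that after the permutation $\sigma$ the tree $\sigma[A^{p^*}]$ is genuinely an $E(\sigma[d])$-tree and that the large-set requirements $\Suc_T(\cdot)\in E(d)$ are preserved — this is where $i(E)=i(E)_N$ and the coherence of the projection maps $\pi_{\beta,\alpha}$ are used. The working-part manipulations are routine given $\lambda$-closure; the measure-theoretic coherence under the coordinate permutation is the delicate point, and it works precisely because $\sigma$ fixes $j(\lambda)$ pointwise so it commutes with the generators $\mathrm{mc}(d)$ in the relevant way. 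Once $\Phi$ is defined and checked to respect $\leq,\leq^*$, bijectivity and the cone statement are immediate, and the lemma follows.
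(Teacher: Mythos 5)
Your reduction to the case where $p$ and $q$ have been extended to conditions with a common domain $d$ and a common tree $A$ matches the paper's first step, but the key mechanism you propose afterwards --- a permutation $\sigma$ of the index set fixing $j(\lambda)$ pointwise, inducing an automorphism of $i(\PP(E,\kappa,\lambda))$ that carries $p^*$ to $q^*$ --- does not work for this forcing, and is also not needed. The coordinates of an extender-based forcing are not interchangeable: each $\alpha$ carries its own ultrafilter $i(E)(\alpha)$ concentrating on a specific $\lambda_\alpha$, clause (4) of the definition of $OB(d)$ pins the value at coordinates in the range of the relevant $j$ to a fixed ordinal, clause (6) ties the values $\nu(\alpha)$ to the \emph{order} of the indices, and membership in $E(d)$ is decided by $\mathrm{mc}(d)=\{\langle j(\alpha),\alpha\rangle:\alpha\in d\}$, which depends on the actual ordinals in $d$ and not on their order type. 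So relabelling coordinates does not in general send $E(d)$-trees to $E(\sigma[d])$-trees, and the appeal to $\langle \PP^*,\leq^*\rangle\approx\Add(\lambda,|j(\lambda)|)$ only gives automorphisms of the Cohen part, not of the full forcing with its measure-one trees. Moreover, after your own first step the two conditions already have the \emph{same} domain, so there is no support above $j(\lambda)$ left to move: what differs is only the finite stems $f^{p'}(\alpha)$ versus $f^{q'}(\alpha)$ at the \emph{same} coordinates $\alpha\in d\setminus j(\lambda)$, and no coordinate permutation can repair that (nor can a $\leq^*$-extension change or lengthen existing stems, since $\leq^*$ on working parts is just $\supseteq$ of functions).

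The paper's isomorphism is instead a stem swap at fixed coordinates: $\Phi(r)$ keeps $\dom(f^r)$ and $A^r$, keeps $f^r$ off $\dom(f^{p^*})$, and on $\dom(f^{p^*})$ replaces the old stem $f^{p^*}(\alpha)$ by $f^{q^*}(\alpha)$ while retaining the new part $f^r(\alpha)\setminus f^{p^*}(\alpha)$. The genuinely delicate point --- which your proposal does not address --- is that the one-point extension $f_{\langle\nu\rangle}$ appends $\nu(\alpha)$ only when $\nu(\alpha)>\max(f(\alpha))$, and the maxima of the $p$-stems and $q$-stems may differ; one must first shrink the trees (the auxiliary trees $T(\nu_0,\dots,\nu_{n-1})$ together with Lemma 3.12 of Merimovich) so that $\nu(\alpha)>\max(f^{p^*}(\alpha))\Leftrightarrow\nu(\alpha)>\max(f^{q^*}(\alpha))$ for all relevant $\nu$, which is exactly what makes the stem swap commute with non-direct extension and hence makes $\Phi$ an isomorphism of cones. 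Without this pre-shrinking, and with the coordinate permutation in place of the stem swap, the argument as proposed does not go through.
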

\begin{proof}
Let  $p_1 \leq p$ and $q_1 \leq q$ be such that
\begin{enumerate}
\item $\dom(f^{p_1})=\dom(g^{q_1}),$ call it $d$,
\item $A^{p_1} = A^{q_1},$ call it $A$.
\end{enumerate}
For each $n<\omega$ and every $\langle \nu_0, \dots, \nu_{n-1}    \rangle \in A$ let $T ( \nu_0, \dots, \nu_{n-1}) \subseteq A_{\langle \nu_0, \dots, \nu_{n-1}    \rangle}$
 be such that 
for all $\langle \nu \rangle \in T ( \nu_0, \dots, \nu_{n-1})$ and all $\alpha \in \dom(\nu),$
\[
\nu(\alpha) > \max(f^{p_1}(\alpha)) \Leftrightarrow \nu(\alpha) > \max(f^{q_1}(\alpha)).
\]
By Lemma 3.12 \cite{mer4}, there are $p^* \leq^* p_1$ and $q^* \leq^* q_1$ such that
\begin{enumerate}
\item [(3)] $f^{p^*}=f^{p_1}$ and $f^{q^*}=f^{q_1}$,
\item [(4)] For each $n<\omega$ and $\langle \nu_0, \dots, \nu_{n-1}    \rangle \in A^{p^*},$
\begin{center}
$p^*_{\langle \nu_0, \dots, \nu_{n-1}    \rangle} \leq^* \langle f^{p_1}_{\langle \nu_0, \dots, \nu_{n-1}    \rangle}, T( \nu_0, \dots, \nu_{n-1})             \rangle$,
\end{center}
\item [(5)] For each $n<\omega$ and $\langle \nu_0, \dots, \nu_{n-1}    \rangle \in A^{q^*},$
\begin{center}
$q^*_{\langle \nu_0, \dots, \nu_{n-1}    \rangle} \leq^* \langle f^{q_1}_{\langle \nu_0, \dots, \nu_{n-1}    \rangle}, T( \nu_0, \dots, \nu_{n-1})                 \rangle$.
\end{center}
\end{enumerate}

We now define an isomorphism $\Phi$ from $i(\PP(E, \kappa, \lambda))\downarrow p^*$ onto $i(\PP(E, \kappa, \lambda))\downarrow q^*$
as follows: Assume $r \in i(\PP(E, \kappa, \lambda))$ and $r \leq p^*.$ Let $\Phi(r) \in i(\PP(E, \kappa, \lambda))$ be such that
\begin{enumerate}
\item [(6)] $\dom(f^{\Phi(r)})=\dom(f^r),$

\item [(7)] $\forall \alpha \in \dom(f^r)\setminus \dom(f^{p^*}), f^{\Phi(r)}(\alpha)=f^r(\alpha),$

\item [(8)] $\forall \alpha \in  \dom(f^{p^*}), f^{\Phi(r)}(\alpha)=f^{q^*}(\alpha) \cup (f^r(\alpha) \setminus f^{p^*}(\alpha)),$

\item [(9)] $A^{\Phi(r)}=A^r.$
\end{enumerate}
By our choice of $T( \nu_0, \dots, \nu_{n-1})$'s, $\Phi(r)$ is well-defined and it extends $q^*,$ so $\Phi(r) \in i(\PP(E, \kappa, \lambda))\downarrow q^*$
and
\[
\Phi: i(\PP(E, \kappa, \lambda))\downarrow p^* \rightarrow i(\PP(E, \kappa, \lambda)) \downarrow q^*
\]
is well-defined. It is also easily seen that $\Phi$
is in fact an isomorphism. The lemma follows.
\end{proof}

\subsection{Completing the proof of main theorem \ref{definable tree property at a singular}}
Finally we are ready to complete the proof of theorem \ref{definable tree property at a singular}. Let $V[G]$ be the generic extension obtained by $\PP(E, \kappa, \lambda).$
By theorem \ref{properties of prikry forcing}, in $V[G],$ $\kappa$ is strong limit singular of cofinality $\omega$
and $\kappa^+=\lambda.$ Further if $|j(\lambda)|> \lambda,$ then $2^\kappa > \kappa^+$ in $V[G].$ So it suffices to show that the definable tree
property holds in $V[G]$ at $\kappa^+=\lambda.$

Note that $H^{V[G]}(\lambda)=H^{N[G]}(\lambda)$. Now let $T \in V[G]$ be a $\lambda$-tree which is definable in $H^{V[G]}(\lambda)$ using parameters from  $H^{V[G]}(\lambda)$.
Also consider the forcing $i(\PP(E, \kappa, \lambda))$, and let $H$ be $i(\PP(E, \kappa, \lambda))$-generic over $V$ so that $G$
is the filter generated by $\pi[H];$ this is possible as $\pi$ is a projection map. We have $i[G]=G \subseteq H,$ so we can lift $i$ to an elementary embedding
\[
i^*: V[G] \rightarrow N[H]
\]
which is definable in $V[H].$

Then $i^*(T) \in N[H]$ is an $i^*(\lambda)$-tree, and since $i^*(\lambda)=i(\lambda)> \lambda,$  we can take some $x \in i^*(T)_\lambda$, the $\lambda$-th level of $i^*(T)$.
Now consider
\[
b=\{y \in i^*(T): y <_{i^*(T)} x  \}.
\]
Then $b$ is a branch of $T$ which lies in $N[H] \subseteq V[H].$ But $b$ is definable in $V[H]$ using parameters from $V[G],$
and hence using the homogeneity lemma \ref{homogeneity lemma}, $b \in V[G].$ Thus $T$ has a cofinal branch in $V[G]$, and the result follows.

\section{Open questions}
We proved that the consistency strength of having definable tree property for successor of every regular cardinal is exactly the consistency strength of having proper class many  $\Pi_{1}^{1}$ - reflecting cardinals. As it is stated in the part (7) of proposition \ref{results}, the existing proof for the consistency of usual tree property for a much smaller subclass of successors of regular cardinals, namely $\{\aleph_n~|~1<n< \omega\}$, uses a very strong large cardinal assumption in order of $\omega$ - many supercompacts. We also decreased the large cardinal assumption necessary for proving the consistency of definable tree property at successor of a singular cardinal.

The question regarding the consistency and consistency strength of usual tree property for successors of all regular cardinals is still open. The questions related to the consistency of tree property for successors of all singular cardinals and also for all regular cardinals in general are also open. Inspired by these open problems regarding the usual tree property, the following similar questions about definable tree property arise:

\begin{question}
Is it consistent to have definable tree property for successor of every singular cardinal? What is the consistency strength of this statement?
\end{question}

\begin{question}
Is it consistent to have definable tree property for all regular cardinals? What is the precise consistency strength of it?
\end{question}

 \end{document}